\newtheorem{definition}{Definition}
\newtheorem{lemma}{Lemma}
\newtheorem{corollary}{Corollary}
\newtheorem{theorem}{Theorem}
\newtheorem{remark}{Remark}
\newcommand{\TT}{\mathbb{T}}
\newcommand{\PP}{\mathbb{P}}
\newcommand{\II}{\mathbb{I}}
\newcommand{\RR}{\mathbb{R}}
\newcommand{\ZZ}{\mathbb{Z}}
\newcommand{\CC}{\mathbb{C}}
\newcommand{\expect}{\mathbb{E}}
\newcommand{\cE}{\mathcal{E}}
\newcommand{\cF}{\mathcal{F}}
\newcommand{\dd}{\mathrm{d}}
\newcommand{\Lz}{{L^2_0(\TT)}}
\newcommand{\cS}{\mathcal{S}}
\newcommand{\cA}{\mathcal{A}}
\newcommand{\cC}{\mathcal{C}}
\newcommand{\eps}{\varepsilon}
\newcommand{\cyl}{\mathcal{C}yl}
\newcommand{\Q}{\ZZ^2\backslash\{0\}}
\newcommand{\QN}{\ZZ^2_N\backslash\{0\}}
\begin{document}

\title[Regularization for SBE]{Regularization by noise and \\  stochastic Burgers equations}
\author{M.~Gubinelli}
\address[M.~Gubinelli]{CEREMADE  UMR 7534 -- Universit\'e Paris--Dauphine}
\email[M.~Gubinelli]{massimiliano.gubinelli@ceremade.dauphine.fr}
\author{M.~Jara}
\address[M.~Jara]{IMPA\\
Estrada Dona Castorina 110\\
CEP 22460-320\\
Rio de Janeiro\\
Brazil}
\email[M.~Jara]{mjara@impa.br}

\begin{abstract}
We study a generalized 1d periodic SPDE of Burgers type:
$$
\partial_t u =- A^\theta u  + \partial_x u^2  + A^{\theta/2} \xi 
$$
where $\theta > 1/2$, $-A$ is the 1d Laplacian,  $\xi$ is a space-time white noise and  the initial condition $u_0$ is taken to be (space) white noise. We introduce a notion of weak solution for this equation in the stationary setting. For these solutions we point out how the noise provide a regularizing effect allowing to prove existence and suitable estimates when $\theta>1/2$. When $\theta>5/4$ we obtain pathwise uniqueness. We discuss the use of the same method to study different approximations of the same equation and for a model of stationary 2d stochastic Navier-Stokes evolution. 
\end{abstract}
\keywords{Kardar--Parisi--Zhang equation, SPDEs, noise regularization}
\subjclass[2000]{00X00}

\maketitle

The stochastic Burgers equation (SBE) on the one dimensional torus $\TT=(-\pi,\pi]$ is the SPDE
\begin{equation}
\label{eq:burgers}
\dd u_t = \frac12 \partial_\xi^2 u_t(\xi) \dd t + \frac12 \partial_\xi (u_t(\xi))^2 \dd t + \partial_\xi \dd W_t 
\end{equation}
where $W_t$ is a cylindrical white noise on the Hilbert space $H=\Lz$ of square integrable, mean zero real function on $\TT$ and it has the form
$
W_t(\xi) = \sum_{k\in\ZZ_0} e_k(\xi) \beta^k_t
$
with $\ZZ_0 = \ZZ\backslash \{0\}$ and $e_k(\xi)=e^{i k\xi}/\sqrt{2\pi}$ and $\{\beta_t^k\}_{t\ge 0, k\in\ZZ_0}$ is a family of complex Brownian motions such that $(\beta^k_t)^*=\beta^{-k}_t$ and with covariance $\expect[\beta_t^k \beta_t^q ]=\II_{q+k=0}$.   Formally the solution $u$ of eq.~\eqref{eq:burgers} is the derivative of the solution  of the Kardar--Parisi--Zhang equation
\begin{equation}
\label{eq:kpz}
\dd h_t = \frac12 \partial_\xi^2 h_t(\xi) \dd t + \frac12  (\partial_\xi h_t(\xi))^2 \dd t +  \dd W_t 
\end{equation}
which is believed to capture the macroscopic behavior of a large class of surface growth phenomena~\cite{KPZ}.

The main difficulty with eq.~\eqref{eq:burgers} is given by the rough nonlinearity which is incompatible with the distributional nature of the typical trajectories of the process. Note in fact that, at least formally, eq.~\eqref{eq:burgers} preserves the white noise on $H$ and that the square in the non-linearity is almost surely $+\infty$ on the white noise. Additive renormalizations in the form of Wick products are not enough to cure this singularity~\cite{DDT}.

In~\cite{BG} Bertini and Giacomin studying  the scaling limits for the fluctuations of an interacting particles system  show that a particular regularization of~\eqref{eq:burgers}  converges in law to a limiting process $u^{\textrm{hc}}_t(\xi)=\partial_\xi \log Z_t(\xi)$ (which is referred to as the Hopf-Cole solution)  where $Z$ is the solution of the stochastic heat equation with multiplicative space--time white noise
\begin{equation}
\label{eq:she}
\dd Z_t = \frac12 \partial_\xi^2 Z_t(\xi) \dd t + Z_t(\xi) \dd W_t(\xi) .
\end{equation}

The Hopf--Cole solution is believed to be the correct physical solution for~\eqref{eq:burgers} however up to recently a rigorous notion of solution to eq.~\eqref{eq:burgers} was lacking so the issue of uniqueness remained open. 

Jara and Gon\c{c}alves~\cite{JG} introduced a notion of \emph{energy solution} for eq.~\eqref{eq:burgers}  and showed that  the macroscopic current fluctuations of a large class of weakly non-reversible particle systems on $\ZZ$ obey the Burgers equation in this sense. Moreover their results show that also the Hopf-Cole solution is an energy solution of eq.~\eqref{eq:burgers}.

More recently Hairer~\cite{Hairer} obtained a complete existence and uniqueness result for KPZ. In this remarkable paper the theory of controlled rough paths is used to give meaning to the nonlinearity and a careful analysis of the series expansion of the candidate solutions allow to give a consistent meaning to the equation and to obtain a uniqueness result. In particular Hairer's solution coincide with  the Cole-Hopf ansatz.

In this paper we take a different approach to the problem. We want to point out the regularizing effect of the linear stochastic part of the equation on the the non-linear part. This is linked to some similar remarks of Assing~\cite{assing1,assing2} and by the approach of Jara and Gon\c{c}alves~\cite{JG}. Our point of view is motivated also by similar analysis in the PDE and SPDE context where the noise or a dispersive term provide enough regularization to treat some non-linear term: there are examples involving the stochastic transport equation~\cite{FGP}, the periodic Korteweg-de~Vries equation~\cites{kdv,babin-kdv} and the fast rotating Navier-Stokes equation~\cite{babin-ns}. In particular in the paper~\cite{kdv} it is shown how, in the context of the periodic Korteweg-de~Vries equation, an appropriate notion of controlled solution can make sense of the non-linear term in a space of distributions. This point of view has also links with the approach via controlled paths to the theory of rough paths~\cite{controlling}.

With our approach we are not able to obtain uniqueness for the SBE above and we resort to study the more general equation (SBE$_\theta$):
\begin{equation}
\label{eq:burgers-theta}
\dd u_t = - A^\theta u_t \dd t + F(u_t) \dd t + A^{\theta/2} \dd W_t 
\end{equation}
where $F(u_t)(\xi)=\partial_\xi (u_t(\xi))^2$, $-A$ is the Lapacian with periodic b.c., where $\theta\ge 0$ and where the initial condition is taken to be white noise. In the case $\theta=1$ we essentially recover the stationary case of the SBE above (modulo a mismatch in the noise term which do not affect its law). 

For any $\theta \ge 0$ we introduce a class $\mathcal{R}_\theta$ of distributional processes "controlled" by the noise, in the sense that these processes have a  \emph{small time} behaviour similar to that of the stationary  Ornstein-Uhlenbech process $X$  which solves the linear part of the dynamics:
\begin{equation}
\label{eq:ou-theta}
\dd X_t = - A^\theta X_t \dd t + A^{\theta/2} \dd W_t,
\end{equation}
where $X_0$ is white noise.
 When $\theta > 1/2$ we are able to show that the \emph{time integral} of the non-linear term appearing in SBE$_\theta$ is well defined, namely that for all $v\in \mathcal{R}_\theta$ 
\begin{equation}
\label{eq:drift-process}
A^v_t = \int_0 ^t F(v_s) \dd s
\end{equation}
is a well defined process with continous paths in a space of distributions on $\TT$ of specific regularity. Note that this process is not necessarily of finite variation with respect to the time parameter even when tested with smooth test functions. 

The existence of the drift process~\eqref{eq:drift-process} allows to formulate naturally the SBE$_\theta$ equation in the space $\mathcal{R}_\theta$ of controlled processes and gives a notion of solution quite similar to that of energy solution introduced by Jara and Gon\c{c}alves~\cite{JG}. Existence of (probabilistically) weak solutions will be established for any  $\theta > 1/2$, that is well below the KPZ regime. The precise notion of solution will be described below. We are also able to show easily pathwise uniqueness when $\theta > 5/4$ but the  case $\theta=1$ seems still (way) out of range for this technique. In particular the question of pathwise uniqueness is tightly linked with that of existence of strong solutions and the key estimates which will allow us to handle the drift~\eqref{eq:drift-process} are not strong enough to give a control on the difference of two solutions (with the same noise) or on the sequence of Galerkin approximations.

Similar regularization phenomena for stochastic transport equations are studied in~\cite{FGP} and in~\cite{DF} for infinite dimensional SDEs. This is also linked to the fundamental paper of Kipnis and Varadhan~\cite{KV} on CLT for additive functionals and to the Lyons-Zheng representation for diffusions with singular drifts~\cites{MR1988703, MR2065168}.

\bigskip
\textbf{Plan.} In
Sec.~\ref{sec:controlled} we define the class of controlled paths and we recall some results of the stochastic calculus via regularization which are needed to handle the It\^o formula for the controlled processes.
Sec.~\ref{sec:ito-trick} is devoted to introduce our main tool which is a moment estimate of an additive functional of a stationary Dirichlet process in terms of the quadratic variation of suitable forward and backward martingales. In Sec.~\ref{sec:estimates} we use this estimate to provide uniform bounds for the drift of any stationary solution. These bounds are  used in Sec.~\ref{sec:existence} to prove tightness of the approximations when $\theta > 1/2$ and to show existence of controlled solution of the stochastic Burgers equation via Galerkin approximations. Finally in Sec.~\ref{sec:uniq} we prove our pathwise uniqueness result in the case $\theta > 5/4$. In Sec.~\ref{sec:alternative} we discuss related results for the model introduced in~\cite{DDT}.

\bigskip
\textbf{Notations.} We write $X \lesssim_{a,b,\dots} Y$ if there exists a positive constant $C$ depending only on $a,b,\dots$ such that $X \le C Y$. We write $X \sim_{a,b,\dots} Y$ iff $X\lesssim_{a,b,\dots} Y \lesssim_{a,b,\dots} X$. 

We let $\cS$ be the space of smooth test functions on $\TT$, $\cS'$ the space of distributions and $\langle \cdot,\cdot\rangle$ the corresponding duality.

On the Hilbert space $H=\Lz$ the family $\{e_k\}_{k\in\ZZ_0}$ is a complete orthonormal basis. On $H$ we consider the space of smooth cylinder functions  $\cyl$ which depends only on finitely many coordinates on the basis $\{e_k\}_{k\in \ZZ_0}$ and for $\varphi \in\cyl$ we consider the gradient $D \varphi : H\to H$ defined as $D \varphi(x) = \sum_{k\in\ZZ_0} D_k \varphi(x) e_k$ where $D_k = \partial_{x_k}$ and  $x_k = \langle e_k,x \rangle$ are the coordinates of $x$.

For any $\alpha\in \RR$ define the space $\cF L^{p,\alpha}$ of functions on the torus for which
$$
|x|_{\cF L^{p,\alpha}} = \big[\sum_{k\in\ZZ_0} (|k|^\alpha |x_k|)^p\big]^{1/p}<+\infty
\, \text{ 
if $p<\infty$ and 
 }\, 
|x|_{\cF L^{\infty,\alpha}} = \sup_{k\in\ZZ_0} |k|^\alpha |x_k| <+\infty .
$$
We will use the notation $H^\alpha = \cF L^{2,\alpha}$ for the usual Sobolev spaces of periodic functions on $\TT$.
We let $A=-\partial_\xi^2$ and $B=\partial_\xi$ as  unbounded operators acting on  $H$ with domains respectively $H^2$ and $H^{1}$. Note that $\{e_k\}_{k\in\ZZ_0}$ is a basis of eigenvectors of $A$ for which we denote $\{\lambda_k = |k|^2 \}_{k\in\ZZ_0}$ the associated eigenvalues.  The operator $A^\theta$ will then be defined on $H^{\theta}$ by $A^\theta e_k = |k|^{2\theta}e_k$ with domain $H^{2\theta}$.
The linear operator $\Pi_N: H \to H$ is the projection on the subspace generated by $\{e_k\}_{k\in\ZZ_0, |k|\le N}$. 

Denote $\cC_T V = C([0,T],V)$ the space of continuous functions from $[0,T]$ to the Banach space $V$ endowed with the supremum norm and with $\cC^\gamma_T V = C^\gamma([0,T],V)$ the subspace of $\gamma$-H\"older continuous functions in $\cC_T V$ with the $\gamma$-H\"older norm.

\section{Controlled processes}
\label{sec:controlled}

We introduce a space of stationary processes which ``looks like" an Ornstein-Uhlenbeck process. The invariant law at fixed time of these processes will be given by the canonical Gaussian cylindrical measure $\mu$ on $H$ which we consider as a Gaussian measure on $H^{\alpha}$ for any $\alpha<-1/2$. This measure is fully characterized by the equation
$$
\int e^{i \langle \psi,x \rangle}\mu(\dd x) = e^{-\langle \psi,\psi\rangle/2}, \qquad \forall\psi\in H ;
$$
or alternatively by the integration by parts formula
$$
\int D_k \varphi(x) \mu(\dd x) = \int x_{-k} \varphi(x) \mu(\dd x),\qquad  \forall k\in\ZZ_0, \varphi \in\cyl .
$$

\begin{definition}[Controlled process]
\label{def:controlled}
For any $\theta\ge 0$ let $\mathcal{R}_\theta$ be the space of stationary stochastic processes $(u_t)_{0 \leq t \leq T}$ with  continuous paths in $\cS'$ such that 
\begin{itemize}
\item[i)] the law of $u_t$ is the white noise $\mu$ for all $t\in[0,T]$;
\item[ii)] there exists a process $\cA \in C([0,T],\cS')$ of zero quadratic variation such that $\cA_0 = 0$ and satisfying the equation
\begin{equation}
\label{eq:controlled-decomposition}
u_t(\varphi) = u_0(\varphi) + \int_0^t u_s(-A^\theta \varphi) \dd s+\cA_t(\varphi) + M_t(\varphi)
\end{equation}
for any test function $\varphi \in \mathcal S$, where $M_t(\varphi)$ is a martingale with respect to the filtration generated by $u$ with quadratic variation $[M(\varphi)]_t =  2t\|A^{\theta/2} \varphi\|_{L^2_0(\TT)}^2$;
\item[iii)] the reversed processes $\hat u_t = u_{T-t}$, $\hat \cA_t = -\cA_{T-t}$ satisfies the same equation with respect to its own filtration (the backward filtration of $u$).
\end{itemize}
\end{definition}

For controlled processes  we will prove that if $\theta>1/2$ the Burgers drift is well defined by approximating it and passing to the limit. Let  $\rho:\RR\to\RR$ be a  positive smooth test function with unit integral and $\rho^\eps(\xi)=\rho(\xi/\eps)/\eps$ for all $\eps>0$. For simplicity in the proofs we require that the function $\rho$ has a Fourier transform $\hat\rho$ supported in some ball and such that $\hat\rho = 1$ in a smaller ball. This is a technical condition which is easy to remove but we refrain to do so here not to obscure the main line of the arguments. 

\begin{lemma}
\label{lemma:burgers-drift}
If $u\in\mathcal{R}_\theta$ and if $\theta >1/2$ then almost surely
$$
\lim_{\eps\to 0} \int_0^t  F(\rho^\eps* u_s) \dd s
$$
exists in the space $C([0,T],\cF L^{\zeta,\infty})$ for some $\zeta<0$. We \emph{denote} with $\int_0^t  F( u_s) \dd s$ the resulting process  with values in $C([0,T],\cF L^{\zeta,\infty})$.
\end{lemma}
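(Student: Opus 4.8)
The plan is to show that the family $\{\int_0^t F(\rho^\eps * u_s)\,\dd s\}_{\eps>0}$ is Cauchy in $L^2(\Omega; C([0,T],\cF L^{\zeta,\infty}))$ for a suitable $\zeta<0$, and hence converges (along a subsequence almost surely, but in fact almost surely along the full net once the estimates are dyadic-summable). Fix a frequency $k\in\ZZ_0$; the $k$-th Fourier mode of $F(\rho^\eps * u_s)$ is a quadratic functional of the Gaussian field $u_s$, explicitly $\widehat{F(\rho^\eps*u_s)}_k = ik\sum_{j+\ell=k}\hat\rho(\eps j)\hat\rho(\eps \ell)\, (u_s)_j (u_s)_\ell$. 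Since the paper requires $\hat\rho$ to be $1$ on a small ball, for $|j|,|\ell|$ in that ball the cutoffs are exactly $1$, which makes comparing two regularizations $\eps,\eps'$ clean: the difference involves only modes where at least one of $j,\ell$ is large, giving a gain. The key device is the It\^o trick of Section~\ref{sec:ito-trick} (the moment estimate for additive functionals of stationary Dirichlet processes in terms of forward/backward martingale quadratic variations), applied to the test function $\varphi = e_k$ (or rather to the real functionals built from it). This reduces the $L^2$-norm in time of $\int_0^t \widehat{F(\rho^\eps*u_s)}_k\,\dd s$ to a Dirichlet-form quantity, namely (up to constants) $t\,\|A^{\theta/2}(\cdot)\|^{-2}$-weighted energy of the quadratic function $x\mapsto \widehat{F(\rho^\eps*x)}_k$ under the Gaussian measure $\mu$.

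Concretely, the second step is: using the integration-by-parts/spectral-gap structure of $\mu$ and the explicit Gaussian chaos expansion, one computes that for the quadratic functional $G^\eps_k(x) = \widehat{F(\rho^\eps*x)}_k$ one has a bound of the shape
$$
\expect\Big[\sup_{t\le T}\big|\int_0^t G^\eps_k(u_s)\,\dd s\big|^2\Big] \lesssim_T \frac{1}{\lambda_k^\theta}\sum_{j+\ell=k}\frac{|k|^2 |\hat\rho(\eps j)\hat\rho(\eps \ell)|^2}{\lambda_j^{\theta}+\lambda_\ell^{\theta}}.
$$
The $\lambda_j^\theta+\lambda_\ell^\theta$ in the denominator is the essential regularization gain coming from the noise acting through $A^{\theta/2}$: this is exactly the "resonance" denominator familiar from controlled/paracontrolled analysis, and it is what makes the sum over $j+\ell=k$ convergent when $\theta>1/2$. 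Indeed $\sum_{j+\ell=k}(\lambda_j^\theta+\lambda_\ell^\theta)^{-1}\lesssim |k|^{1-2\theta}$ (up to logs at the boundary), so the right side is $\lesssim_T |k|^{2}|k|^{-2\theta}|k|^{1-2\theta} = |k|^{3-4\theta}$, which is summable-with-a-power-to-spare in $\cF L^{\zeta,\infty}$ for $\zeta$ slightly below $(3-4\theta)/2 < -1/2$ when $\theta>1/2$. Replacing $G^\eps_k$ by $G^\eps_k - G^{\eps'}_k$ and noting that the cutoffs force at least one large frequency in every surviving term, the same computation yields a bound that tends to $0$ as $\eps,\eps'\to 0$, so the net is Cauchy; a Kolmogorov-type argument in the time variable upgrades this to continuity in $t$, and dyadic summation of the $\eps$-increments gives almost-sure convergence.

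The main obstacle is the second step: making the It\^o-trick estimate actually produce the denominator $\lambda_j^\theta + \lambda_\ell^\theta$ rather than just $\lambda_k^\theta$. This requires choosing the right martingale decomposition — one must not just test the equation \eqref{eq:controlled-decomposition} against $e_k$ and bound $[M(e_k)]$, but rather exploit that $\int_0^t G^\eps_k(u_s)\,\dd s$ is itself (the bounded-variation part of) the Dirichlet decomposition of the process $s\mapsto g^\eps_k(u_s)$ for an auxiliary function $g^\eps_k$ solving a Poisson-type equation $\mathcal L g^\eps_k = G^\eps_k$ against the OU generator $\mathcal L = -\langle A^\theta x, D\rangle + \tfrac12\mathrm{Tr}(A^\theta D^2)$ restricted to the relevant chaos; the "cost" of inverting $\mathcal L$ on a product $x_j x_\ell$ is precisely $(\lambda_j^\theta+\lambda_\ell^\theta)^{-1}$, and the quadratic variation of the resulting martingale is controlled by the carr\'e du champ $\|A^{\theta/2} D g^\eps_k\|^2$, which is where the extra $\lambda_j^\theta$ (not $\lambda_k^\theta$) enters. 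The forward-backward symmetry built into condition (iii) of Definition~\ref{def:controlled} (via $\hat u$, $\hat\cA$) is exactly what allows the It\^o-trick to apply to this non-finite-variation drift. Once this estimate is in hand the rest is bookkeeping: summation over $k$, the passage from fixed $t$ to $C([0,T],\cdot)$, and the identification of the limit as independent of the mollifier $\rho$.
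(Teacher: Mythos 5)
Your proposal follows essentially the same route as the paper: solve the Poisson equation $L_0 g_k = G_k$ for the Ornstein--Uhlenbeck generator (the paper's $H_N$ with coefficients $2ik\,x_{k_1}x_{k_2}/(|k_1|^{2\theta}+|k_2|^{2\theta})$), apply the forward--backward martingale decomposition of Lemma~\ref{lemma:ito-trick} using property (iii) of controlled processes, bound the quadratic variation by the carr\'e du champ $\cE^\theta(g_k)$ as in Lemma~\ref{lemma:energy-estimates}, and gain the Cauchy property in $\eps$ from the fact that the difference of two mollifications only involves high frequencies (the paper's estimate \eqref{eq:basic-est-3}, obtained by reducing $\rho^\eps * u$ to the Galerkin truncation $F_{\eps,N}$ via the compact support of $\hat\rho$). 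One quantitative slip: the prefactor $\lambda_k^{-\theta}$ in your displayed moment bound does not belong there --- it is the gain produced by the semigroup factor $e^{-|k|^{2\theta}(t-s)}$ in the \emph{convolved} drift $\tilde G$ of Lemma~\ref{lemma:main-bounds-conv}, whereas for the plain time integral the It\^o trick yields only $T^{1/2}\,\|\cE^\theta(g_k)\|^{1/2}_{L^{p/2}(\mu)}\lesssim T^{1/2}|k|^{3/2-\theta}$ per mode (squared: $T|k|^{3-2\theta}$, not $T|k|^{3-4\theta}$). This forces $\zeta$ below $\theta-3/2$ rather than your $2\theta-3/2$, but since the lemma only asks for \emph{some} $\zeta<0$ the conclusion survives; note also that no separate Kolmogorov argument in time is needed, since the BDG step already controls $\sup_{t\le T}$.
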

\begin{proof} We postpone the proof in Sect.~\ref{sec:estimates}.
\end{proof}

 It will turn out that for this process we have a good control of its space and time regularity and also some exponential moment estimates.  Then it is relatively natural to \emph{define} solutions of eq.~\eqref{eq:burgers-theta} by the following self-consistency condition.

\begin{definition}[Controlled solution] Let $\theta>1/2$, then a process $u\in\mathcal{R}_\theta$ is a \emph{controlled solution} of   SBE$_\theta$ if almost surely
\begin{equation}
\label{eq:self-consistent}
 \cA_t(\varphi) = \langle \varphi, \int_0^t  F(u_s)  \dd s \rangle
\end{equation}
for any test function $\varphi \in \mathcal S$ and any $t\in[0,T]$.
\end{definition}

Note that these controlled solutions are a generalization of the notion of probabilistically weak solutions of  SBE$_\theta$. The key point is that the drift term is not given explicitly as a function of the solution itself but characterized by the 
self-consistency relation~\eqref{eq:self-consistent}. In this sense controlled solutions are to be understood as a couple $(u,\mathcal{A})$ of processes satisfying compatibility relations.

An analogy which could be familiar to the reader is that with a diffusion on a bounded domain with reflected boundary where the solution is described by a couple of processes $(X,L)$ representing the position of the diffusing particle and its local time at the boundary~\cite{RY}. 

Note also that there is no requirement on $\mathcal{A}$ to be adapted to $u$. Our analysis below cannot exclude the possibility that $\mathcal{A}$ contains some further randomness and that the solutions are strictly weak, that is not adapted to the filtration generated by the martingale term and the initial condition.

\section{The It\^o trick}
\label{sec:ito-trick}

In order to prove the regularization properties of controlled processes we will need some stochastic calculus and in particular an It\^o formula and some estimates for martingales. Let us recall here some basic elements here. In this section $u$ will be always a controlled process in $\mathcal{R}_\theta$. 
For any test function $\varphi\in\cS$ the processes $(u_t(\varphi))_{t}$ and $(\hat u_t(\varphi))_{t}$ are Dirichlet processes: sums of a martingale and a zero quadratic variation process. 
Note that we do not want to assume controlled processes to be semimartingales (even when tested with smooth functions). This is compatible with the regularity of our solutions and there is no clue that solutions of  SBE$_\theta$ even with $\theta=1$ are distributional semimartingales. A suitable notion of stochastic calculus which is valid for a large class of processes and in particular for Dirichlet processes is the stochastic calculus via regularization developed by Russo and Vallois~\cite{RV}. In this approach the It\^o formula can be extended to Dirichlet processes. In particular if $(X^i)_{i=1,\dots,k}$ is an $\RR^k$ valued Dirichlet process and $g$ is a $C^2(\RR^k;\RR)$ function then
$$
g(X_t) = g(X_0) + \sum_{i=1}^k\int_0^t \partial_i g(X_s) \dd^- X^i_s + \frac12 \sum_{i,j=1}^k \int_0^t \partial^2_{i,j} g(X_s) \dd^- [X^i,X^j]_s
$$
where $\dd^-$ denotes the forward integral and $[X,X]$ the quadratic covariation of the vector process $X$. Decomposing $X=M+N$ as the sum of a martingale $M$ and a zero quadratic variation process $N$ we have $[X,X]=[M,M]$ and
$$
g(X_t) = g(X_0) +  \sum_{i=1}^k \int_0^t \partial_i g(X_s) \dd^- M^i_s +  \sum_{i=1}^k \int_0^t \partial_i g(X_s) \dd^- N^i_s
$$
$$ +   \sum_{i,j=1}^k\frac12 \int_0^t \partial^2_{i,j} g(X_s) \dd^- [M^i,M^j]_s
$$
where now $\dd^- M$ coincide with  the usual It\^o integral and $[M,M]$ is the usual quadratic variation of the martingale $M$. The integral $\int_0^t \partial_i g(X_s) \dd^- N^i_s$ is well-defined due to the fact that all the other terms in this formula are well defined. The case the function $g$ depends explicitly on time can be handled by the above formula by considering time as an additional (0-th) component of the process $X$ and using the fact that $[X^i,X^0]=0$ for all $i=1,..,k$. In the computations which follows we will only need to apply the It\^o formula to smooth functions.

Let us denote by $L_0$ the generator of the Ornstein-Uhlenbeck process associated to the operator $A^\theta$:
\begin{equation}
\label{eq:generator-ou}
L_0 \varphi(x) = \sum_{k \in \mathbb Z_0} |k|^{2\theta} \big(- x_k D_k \varphi(x) + \tfrac{1}{2} D_{-k}D_k \varphi(x)\big). 
\end{equation}

Consider now a smooth cylinder function  $h:[0,T]\times \Pi_N H\to \RR$. The It\^o formula for the finite quadratic variation process $(u^N_t = \Pi_N u_t)_t$ gives
$$
h(t,u^N_t)=h(0,u^N_0)+\int_0^t (\partial_s + L^N_0) h(s,u^N_s) \dd s +\int_0^t D h(s,u^N_s) \dd\Pi_N \cA_s + M^+_t 
$$ 
where 
$$
L^N_0 h(s,x) = \sum_{k\in\ZZ_0 : |k|\le N} |k|^{2\theta} ( x_{k} D_k h(s,x) + D_k D_{-k} h(s,x))
$$
is the restriction of the operator $L_0$ to $\Pi_N H$ and where
the martingale part denoted $M^+$ has quadratic variation given by
$
[ M^+ ]_t = \int_0^t \cE^\theta_N(h(s,\cdot))(u^N_s) \dd s
$,
where 
$$
\cE_N^\theta(\varphi)(x) = \frac12  \sum_{k\in \ZZ_0: |k|\le N}|k|^{2\theta}  |D_k \varphi(x)|^2 ,
$$
Similarly the It\^o formula on the backward process reads
$$
h(T-t,u^N_{T-t})=h(T,u^N_T)+ \int_0^{t} (-\partial_s + L^N_0) h(T-s,u^N_{T-s}) \dd s
$$
$$
- \int_0^{t} D h(T-s,u^N_{T-s}) \dd \Pi_N \cA_{T-s} + M^-_t 
$$
with $
[ M^- ]_t = \int_0^t \cE^\theta_N(h(T-s,\cdot))(u^N_{T-s}) \dd s
$
so we have the key equality
\begin{equation}
\label{eq:key-representation}
\int_0^t 2 L_0^N   h(s,u^N_{s})\dd s= -M^+_t + M^-_{T-t}-M^-_T. 
\end{equation}
which allows us to represent the time integral of $h$ as a sum of martingales which allows better control. 
On this martingale representation result we can use the Burkholder--Davis--Gundy inequalities to prove the following bound.
\begin{lemma}[It\^o trick] 
\label{lemma:ito-trick}
Let $h : [0,T]\times \Pi_N H \to \RR$ be a cylinder function. Then
for any $p \geq 1$,
\begin{equation}
\label{eq:ito-trick}
\left\|\sup_{t\in[0,T]}\left|\int_0^t L_0 h(s,\Pi_N u_s) \dd s\right|\right\|_{L^p(\PP_\mu)} \lesssim_p 
T^{1/2} \sup_{s\in[0,T]}\left\|  \cE^\theta(h(s,\cdot))  \right\|^{1/2}_{L^{p/2}(\mu)} 
\end{equation}
where
$
\cE^\theta(\varphi)(x) = \frac12  \sum_{k\in \ZZ_0}|k|^{2\theta}  |D_k \varphi(x)|^2 
$.
In the particular case $h(s,x)= e^{a(T- s)}\tilde h(x)$ for some $a\in\RR$  we have the improved estimate
\begin{equation}
\label{eq:ito-trick-conv}
\begin{split}
\left\|\int_0^T e^{a(T-s)} L_0 \tilde h(\Pi_N u_s) \dd s\right\|_{L^p(\PP_\mu)} 
\lesssim_p
 \left(\frac{1-e^{2aT}}{2a}\right)^{1/2} 
 \left\| \cE^\theta(\tilde h) \right\|^{1/2}_{L^{p/2}(\mu)} .
 \end{split}
\end{equation}
\end{lemma}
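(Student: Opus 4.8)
The plan is to read the estimate off directly from the martingale representation~\eqref{eq:key-representation}, using the Burkholder--Davis--Gundy (BDG) inequalities together with the stationarity of $u$. First, since $h$ is a cylinder function on $\Pi_N H$, all its partial derivatives $D_k h$ vanish for $|k|>N$, so that $L^N_0 h = L_0 h$ and $\cE^\theta_N(h) = \cE^\theta(h)$; moreover $h(s,\Pi_N u_s) = h(s,u^N_s)$. Hence~\eqref{eq:key-representation} becomes, for every $t\in[0,T]$,
$$
\int_0^t 2 L_0 h(s,\Pi_N u_s)\,\dd s = -M^+_t + M^-_{T-t} - M^-_T,
$$
where $M^+$ is a forward martingale with $[M^+]_t = \int_0^t \cE^\theta(h(s,\cdot))(u_s)\,\dd s$ and, by property (iii) of Definition~\ref{def:controlled}, $M^-$ is a martingale with respect to the backward filtration of $u$ with $[M^-]_t = \int_0^t \cE^\theta(h(T-s,\cdot))(u_{T-s})\,\dd s$.

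Taking the supremum over $t$ and using $\sup_{t}|M^-_{T-t}-M^-_T|\le 2\sup_{s}|M^-_s|$, the triangle inequality in $L^p(\PP_\mu)$ reduces the claim to bounding $\|\sup_t|M^+_t|\|_{L^p(\PP_\mu)}$ and $\|\sup_s|M^-_s|\|_{L^p(\PP_\mu)}$. For each of these I would apply BDG (which is valid for $M^-$ with respect to its own, backward, filtration), obtaining $\|\sup_t|M^\pm_t|\|_{L^p}\lesssim_p \|[M^\pm]_T\|_{L^{p/2}}^{1/2}$. Then Minkowski's integral inequality, the change of variables $s\mapsto T-s$ for $M^-$, and stationarity (property (i): $u_s$ has law $\mu$ for every $s$) give
$$
\|[M^\pm]_T\|_{L^{p/2}(\PP_\mu)} \le \int_0^T \big\|\cE^\theta(h(s,\cdot))(u_s)\big\|_{L^{p/2}(\PP_\mu)}\,\dd s = \int_0^T \big\|\cE^\theta(h(s,\cdot))\big\|_{L^{p/2}(\mu)}\,\dd s \le T \sup_{s\in[0,T]}\big\|\cE^\theta(h(s,\cdot))\big\|_{L^{p/2}(\mu)}.
$$
Taking square roots and absorbing the constant $2$ and the BDG constants into $\lesssim_p$ yields~\eqref{eq:ito-trick}.

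For the improved estimate~\eqref{eq:ito-trick-conv} with $h(s,x) = e^{a(T-s)}\tilde h(x)$ there is no need for the supremum over $t$: evaluating~\eqref{eq:key-representation} at $t=T$ and using $M^-_0=0$ gives $\int_0^T 2 L_0 h(s,\Pi_N u_s)\,\dd s = -(M^+_T + M^-_T)$. Since $D_k h(s,x) = e^{a(T-s)}D_k\tilde h(x)$ we have $\cE^\theta(h(s,\cdot)) = e^{2a(T-s)}\cE^\theta(\tilde h)$, so the same Minkowski/stationarity argument (now with the substitution $r=T-s$) gives
$$
\|[M^\pm]_T\|_{L^{p/2}(\PP_\mu)} \le \Big(\int_0^T e^{2a(T-s)}\,\dd s\Big)\,\big\|\cE^\theta(\tilde h)\big\|_{L^{p/2}(\mu)} = \frac{e^{2aT}-1}{2a}\,\big\|\cE^\theta(\tilde h)\big\|_{L^{p/2}(\mu)},
$$
and bounding $\|M^\pm_T\|_{L^p}\lesssim_p \|[M^\pm]_T\|_{L^{p/2}}^{1/2}$ by BDG gives~\eqref{eq:ito-trick-conv} (the relevant constant being $\int_0^T e^{2a(T-s)}\,\dd s$, finite and useful in particular when $a<0$).

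The heart of the matter --- representing the time integral of $L_0 h$ as a combination of a forward and a backward martingale --- is already contained in~\eqref{eq:key-representation}, which itself rests on the It\^o formula via regularization for the finite-dimensional Dirichlet process $u^N$ and on the backward dynamics in property (iii). Given that, the only points needing care are: that the cylinder structure of $h$ legitimately removes the $N$-truncation from $L_0$ and $\cE^\theta$; that BDG is applied correctly to the backward martingale $M^-$; and that stationarity is invoked to pass from $\PP_\mu$-moments of $\cE^\theta(h(s,\cdot))(u_s)$ to $\mu$-moments of $\cE^\theta(h(s,\cdot))$. I do not expect a serious obstacle here: the lemma is essentially a repackaging of~\eqref{eq:key-representation}.
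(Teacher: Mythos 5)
Your proof is correct and follows essentially the same route as the paper: the martingale representation \eqref{eq:key-representation}, BDG applied to the forward and backward martingales, then Minkowski's integral inequality and stationarity to reduce to $\mu$-moments of $\cE^\theta(h(s,\cdot))$. You even supply details the paper leaves implicit (the reduction $L_0^N h = L_0 h$ for cylinder $h$, the bound $\sup_t|M^-_{T-t}-M^-_T|\le 2\sup_s|M^-_s|$), and your evaluation $\int_0^T e^{2a(T-s)}\,\dd s = \frac{e^{2aT}-1}{2a}$ is the correct (positive) constant, the paper's $\frac{1-e^{2aT}}{2a}$ being a harmless sign typo.
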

\begin{proof}
$$
\left\|\sup_{t\in[0,T]}\left|\int_0^t  2 L_0^N h(s,u_s) \dd s\right|\right\|_{L^p(\PP_\mu)} \le
\left\|\sup_{t\in[0,T]}|M^+_t| \right\|_{L^p(\PP_\mu)}+
2 \left\|\sup_{t\in[0,T]}|M^-_t| \right\|_{L^p(\PP_\mu)}
$$
$$
\lesssim_p \left\| \langle M^+\rangle_T  \right\|_{L^{p/2}(\PP_\mu)}^{1/2}+\left\| \langle M^-\rangle_T  \right\|_{L^{p/2}(\PP_\mu)}^{1/2}
\lesssim_p \left\|\int_0^T \cE^\theta(h(s,\cdot))(u_s) \dd s \right\|_{L^{p/2}(\PP_\mu)}^{1/2}
$$
$$
\lesssim_p \left(\int_0^T\left\| \cE^\theta(h(s,\cdot))(u_s) \right\|_{L^{p/2}(\PP_\mu)} \dd s\right)^{1/2}
\lesssim_p T^{1/2} \sup_{s\in[0,T]} \left\| \cE^\theta(h(s,\cdot)) \right\|_{L^{p/2}(\mu)}^{1/2} .
$$
For the convolution we bound as follows
\begin{equation*}
\begin{split}
\left\|\int_0^T e^{a(T-s)} 2 L_0^N \tilde h(u_s) \dd s\right\|_{L^p(\PP_\mu)} 
&  \lesssim_p \left(\int_0^T e^{2a(T-s)}\dd s\right)^{1/2} 
 \left\| \cE^\theta(\tilde h)(u_0) \right\|^{1/2}_{L^{p/2}(\PP_\mu)} 
 \\ &  \lesssim_p
 \left(\frac{1-e^{2aT}}{2a}\right)^{1/2} 
 \left\| \cE^\theta(\tilde h) \right\|^{1/2}_{L^{p/2}(\mu)}
 \end{split}
\end{equation*}
\end{proof}
The bound~\eqref{eq:ito-trick} in the present form (with the use of the backward martingale to remove the drift part) has been inspired by~\cite{CLO}*{Lemma 4.4}.

\begin{lemma}[Exponential integrability] 
Let  $h : [0,T]\times \Pi_N H \to \RR$ be a cylinder function. Then
\begin{equation}
\label{eq:exp-ito-trick}
\expect \sup_{t\in[0,T]}e^{2 \int_0^t L_0^N h(s,\Pi_N u_s) \dd s} \lesssim 
\expect e^{8 \int_0^T \cE^\theta(h(s,u_s)) \dd s } 
\end{equation}
\end{lemma}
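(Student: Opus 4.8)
The plan is to mimic the martingale‐representation argument behind the It\^o trick (Lemma~\ref{lemma:ito-trick}), but now keep the exponential inside from the start and exploit the exponential martingale (Dol\'eans–Dade) machinery together with the time reversal. Recall from~\eqref{eq:key-representation} that, writing $H_t = \int_0^t 2 L_0^N h(s,\Pi_N u_s)\dd s$, we have $H_t = -M^+_t + M^-_{T-t} - M^-_T$, with $[M^\pm]_t = \int_0^t 2\cE^\theta_N(h(s,\cdot))(u^N_s)\dd s$ along the forward, resp. backward, trajectory. So $\int_0^t L_0^N h = \tfrac12 H_t$ and thus $2\int_0^t L_0^N h(s,\Pi_N u_s)\dd s = H_t = -M^+_t + (M^-_{T-t} - M^-_T)$. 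The main point is that each of these two martingale pieces can be absorbed into an exponential supermartingale whose quadratic variation is controlled by $\int_0^T \cE^\theta_N \dd s$.

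First I would split using Cauchy–Schwarz:
\begin{equation*}
\expect \sup_{t\in[0,T]} e^{H_t}
\le \Big(\expect \sup_{t\in[0,T]} e^{-2M^+_t}\Big)^{1/2}
\Big(\expect \sup_{t\in[0,T]} e^{2(M^-_{T-t}-M^-_T)}\Big)^{1/2}.
\end{equation*}
For the first factor, write $e^{-2M^+_t} = \cE(-2M^+)_t \, e^{2[M^+]_t}$ where $\cE(-2M^+)_t = \exp(-2M^+_t - 2[M^+]_t)$ is the stochastic exponential, a nonnegative local martingale, hence a supermartingale. Then another Cauchy–Schwarz gives
\begin{equation*}
\expect \sup_{t} e^{-2M^+_t}
\le \Big(\expect \sup_t \cE(-4M^+)_t\Big)^{1/2}\Big(\expect e^{8[M^+]_T}\Big)^{1/2},
\end{equation*}
and $\sup_t \cE(-4M^+)_t$ is handled by Doob's $L^1$ maximal inequality in the form $\expect\sup_t \cE(-4M^+)_t \lesssim 1 + \expect[\,\cE(-4M^+)_T \log^+ \cE(-4M^+)_T\,]$, or more simply by the standard bound $\expect\sup_t N_t \le 1 + \expect[N_T\log^+ N_T]$ for a nonnegative supermartingale started at $1$; after a further application of $\cE(-4M^+)_T = \cE(-8M^+)_T^{1/2} e^{8[M^+]_T}$ and Cauchy–Schwarz this is again bounded by a constant times $(\expect e^{C[M^+]_T})^{1/2}$. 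Collecting exponents and using $[M^+]_T = \int_0^T 2\cE^\theta_N(h(s,\cdot))(u^N_s)\dd s \le \int_0^T 2\cE^\theta(h(s,u_s))\dd s$ in law (since $u$ is stationary with law $\mu$ and $\cE^\theta_N \le \cE^\theta$), one obtains the right-hand side $\expect e^{8\int_0^T \cE^\theta(h(s,u_s))\dd s}$ up to constants. The second factor is treated identically, using instead the backward martingale $M^-$ and hypothesis~iii) in Definition~\ref{def:controlled} which guarantees the reversed process is again a controlled process, so the same It\^o/BDG apparatus applies verbatim to $M^-$; the constants are chosen (as in the statement, the loose $8$) so as to dominate all the numerical factors produced by the successive Cauchy–Schwarz splittings. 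Finally one removes the projection $\Pi_N$ and states the result for $h(\cdot,\Pi_N u_\cdot)$ exactly as written.

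The step I expect to be the main obstacle is bookkeeping the constants so that the two nested Cauchy–Schwarz inequalities, Doob's inequality, and the repeated passage between $e^{cM}$ and the true exponential martingale $\cE(cM)$ all fit under the single exponent $8$ on the right; this is why the statement is phrased with an unspecified $\lesssim$ and a generous constant, and the cleanest route is to first prove it with some finite constant $C_p$ in place of $8$ and then check that the elementary inequalities (in particular $ab\le \tfrac12(a^2+b^2)$ applied to split $\cE(-2M)_t e^{2[M]_t}$, plus $\cE(-4M)_T \le \tfrac12(\cE(-8M)_T + e^{8[M]_T})$) can be arranged so the worst exponent landing on $\int_0^T \cE^\theta$ is exactly $8$. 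A secondary technical point is justifying the stochastic‐exponential manipulations for $M^\pm$, which are genuine martingales (not merely Dirichlet‐process components) by the construction in Section~\ref{sec:ito-trick}, so that $\cE(cM^\pm)$ is a bona fide supermartingale and Doob applies; this is immediate from~\eqref{eq:key-representation} and the explicit quadratic variations recorded there.
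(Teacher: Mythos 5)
Your proposal follows the paper's proof in its overall structure: the same martingale representation \eqref{eq:key-representation}, the same Cauchy--Schwarz split of $e^{H_t}$ into a forward factor involving $M^+$ and a backward factor involving $M^-_{T}-M^-_{T-t}$, and the same strategy of controlling each factor through the Dol\'eans--Dade exponential under Novikov's condition. Where you diverge is the second step, and there the paper's version is both cleaner and sharper. The paper writes $e^{2M^+_t}=\bigl(e^{2M^+_t-4\langle M^+\rangle_t}\bigr)e^{4\langle M^+\rangle_t}$, applies Cauchy--Schwarz to get $\bigl[\expect\sup_t e^{4M^+_t-8\langle M^+\rangle_t}\bigr]^{1/2}\bigl[\expect e^{8\langle M^+\rangle_T}\bigr]^{1/2}$, and then applies Doob's $L^2$ maximal inequality to the true martingale $N_t=e^{2M^+_t-4\langle M^+\rangle_t}$, so that $\expect\sup_t N_t^2\le 4\,\expect e^{4M^+_T-8\langle M^+\rangle_T}\le 4$; this lands exactly on the exponent $8$ with no further bookkeeping. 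Your route through Doob's $L\log L$ inequality for $\sup_t\cE(-4M^+)_t$ has two problems. First, the $L\log L$ maximal bound you invoke is a statement about nonnegative \emph{sub}martingales; for a general nonnegative supermartingale started at $1$ only the weak-type estimate $\PP(\sup\ge\lambda)\le 1/\lambda$ holds and $\expect\sup$ need not be finite, so you must first argue (via Novikov) that $\cE(-4M^+)$ is a true martingale before quoting it. Second, as you yourself note, the extra Cauchy--Schwarz steps needed to dispose of the $\log^+$ term produce an exponent $C>8$ on $\langle M^+\rangle_T$, so as written your argument proves the lemma only with $8$ replaced by a larger constant --- not wrong, but weaker than the stated bound, and avoidable by the paper's $L^2$ device. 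Two smaller corrections: the bracket is $[M^\pm]_t=\int_0^t\cE^\theta_N(h(s,\cdot))(u^N_s)\dd s$ (the factor $\tfrac12$ is already built into the definition of $\cE^\theta_N$, so your extra factor $2$ is spurious and would further perturb the constants); and in recombining the two square roots one should note that $\langle M^+\rangle_T$ and $\langle M^-\rangle_T$ have the same law by stationarity and that $\expect e^{8\langle M^+\rangle_T}\ge 1$, which is how the two factors of the form $[\,\cdot\,]^{1/4}$ collapse into the single right-hand side of \eqref{eq:exp-ito-trick}.
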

\begin{proof}
Let as above $M^\pm$ be the (Brownian) martingales in the representation of the integral $\int_0^t L_0^N h(s,\Pi_N u_s) \dd s$. By Cauchy-Schwartz
$$
\expect \sup_{t\in[0,T]}e^{2 \int_0^t L_0^N h(s,\Pi_N u_s) \dd s} \le \left[\expect \sup_{t\in[0,T]}e^{2 M^+_t}\right]^{1/2}  \left[\expect \sup_{t\in[0,T]}e^{2 (M^-_T-M^-_{T-t})}\right]^{1/2}.
$$
By  Novikov's criterion 
$
e^{4 M^+_t - 8 \langle M^+\rangle_t }
$ is a martingale for $t\in[0,T]$ if $\expect e^{8 \langle M^+\rangle_T} < \infty$. In this case
$$
\expect \sup_{t\in[0,T]}e^{2 M^+_t} \le \expect \sup_{t\in[0,T]}(e^{2 M^+_t- 4 \langle M^+\rangle_t}\sup_{t\in[0,T]} e^{ 4 \langle M^+\rangle_t })
$$
$$
\le\left[\expect \sup_{t\in[0,T]} e^{4 M^+_t- 8 \langle M^+\rangle_t}\right]^{1/2} \left[\expect e^{8 \langle M^+\rangle_T }\right]^{1/2}
$$
and by Doob's inequality we get that the previous expression is bounded by
$$
\left[\expect e^{4 M^+_T- 8 \langle M^+\rangle_T}\right]^{1/2} \left[\expect e^{8 \langle M^+\rangle_T }\right]^{1/2}
\le \left[\expect e^{8 \langle M^+\rangle_T }\right]^{1/2}.
$$
Reasoning similarly for $M^-$ we obtain that 
$$
\expect \sup_{t\in[0,T]}e^{2 \int_0^t L_0^N h(s,\Pi_N u_s) \dd s} \le \expect e^{8 \langle M^+\rangle_T } = \expect e^{8 \int_0^T \cE^\theta(h(s,u_s)) \dd s }.
$$
\end{proof}

\section{Estimates on the Burgers drift}
\label{sec:estimates}

In this section we provide the key estimates on the Burgers drift via the quadratic variations of the forward and backward martingales in its decomposition.
Let $F(x)(\xi) = B (x(\xi))^2$ and $F_N(x) = F(\Pi_N x)$. Define 
$$
H_N(x) = -\int_0^\infty F_N(e^{-A^\theta t}x)\dd t
$$
and consider $L_0 H_N(x)$ as acting on each Fourier coordinate of $H_N(x)$. Remark that the second order part of $L_0$ does not appear in the computation of $L_0 F_N$ since 
$$
D_k D_{-k} F(\Pi_N e^{-A^\theta t} x)=0
$$ for each $k\in\ZZ_0$. Indeed
$$
D_{-k} D_k F(\Pi_N e^{-A^\theta t} x) = B [D_{-k} D_k (\Pi_N e^{-A^\theta t} x)^2]=2 B D_{-k} [(\Pi_N e^{-A^\theta t} x) (\Pi_N e^{-A^\theta t} e_k)]
$$
$$
=2   [B(\Pi_N e^{-A^\theta t} e_{-k}) (\Pi_N e^{-A^\theta t} e_k)+(\Pi_N e^{-A^\theta t} e_{-k}) B(\Pi_N e^{-A^\theta t} e_k)] = 0
$$
Then it is easy to check that 
$$
L_0 H_N(\Pi_N x) = \langle A^\theta x, D H_N(\Pi_N x)\rangle  = -2 \int_0^\infty B [(e^{-A^\theta t}\Pi_N x)(A^\theta e^{-A^\theta t} \Pi_N x) ]\dd t
$$
$$
=  -\int_0^\infty \frac{\dd }{\dd t}B [(e^{-A^\theta t}\Pi_N x)^2 ] = B (\Pi_N x)^2=F(\Pi_N x)
$$
since $\lim_{t\to \infty} B [(e^{-A^\theta t}\Pi_N x)^2 ]  = 0$. Denote by $(x_k)_{k\in\ZZ}$ and $(H_N(x)_k)_{k\in\ZZ_0}$ the coordinates of $x=\sum_{k\in\ZZ_0} x_k e_k$ and $H_N(x)=\sum_{k\in\ZZ_0} H_N(x)_k e_k$  in the canonical basis $(e_k)_{k\in\ZZ_0}$. Then a direct computation gives an explicit formula for $H_N(x)$:
$$
(H_{N}(x))_k =
2 ik  \sum_{k_1,k_2 : k=k_1+k_2}  \frac{\II_{|k|,|k_1|,|k_2|\le N}}{|k_1|^{2\theta}+|k_2|^{2\theta}} x_{k_1} x_{k_2}.
$$
Let us denote with $(H_{N}(x))_k^{\pm}$ respectively the real and imaginary parts of this quantity: $(H_{N}(x))_k^{\pm}= ((H_{N}(x))_k\pm (H_{N}(x))_{-k})/(2 i^{\pm})$ where $i^+=1$ and $i^-=i$. Now
$$
(H_{N}(x))^\pm_k =
 i^{\mp}k  \sum_{k_1,k_2 : k=k_1+k_2}  \frac{\II_{|k|,|k_1|,|k_2|\le N}}{k_1^{2\theta}+k_2^{2\theta}} (x_{k_1} x_{k_2}\mp x_{-k_1} x_{-k_2})
$$
and recall that
$
\cE^\theta((H_N)^\pm_k)(x) = \sum_{q\in\ZZ_0} |q|^{2\theta} |D_q H^\pm_{N,k}(x)|^2
$.
\begin{lemma}
\label{lemma:energy-estimates}
For $\lambda >0$ small enough we have
\begin{equation}
\label{eq:first-energy-estimate}
\sup_{k\in\ZZ_0} \expect \exp\left[\lambda |k|^{2\theta-3}  \cE^\theta((H_N)^\pm_k)(u_0)\right] \lesssim 1
\end{equation}
and
\begin{equation}
\label{eq:second-energy-estimate}
\sup_{1\le M \le N}\sup_{k\in\ZZ_0} \expect \exp\left[\lambda |k|^{-2} M^{2\theta-1}  \cE^\theta((H_N-H_M)^\pm_k)(u_0)\right] \lesssim 1.
\end{equation}
\end{lemma}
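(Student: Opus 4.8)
The plan is to deduce both bounds from a soft estimate on exponential moments of nonnegative quadratic functionals of the white noise, which reduces everything to controlling the \emph{expectation} of the energies $\cE^\theta((H_N)^\pm_k)$ and $\cE^\theta((H_N-H_M)^\pm_k)$.

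First I would note that, since $(H_N)^\pm_k$ is a real second--order polynomial in the white noise coordinates involving only the finitely many modes with $|\cdot|\le N$, the quantity $Q(x):=\cE^\theta((H_N)^\pm_k)(x)$ is a nonnegative quadratic form $Q(x)=\la x,\Gamma x\ra$ for a finite--rank nonnegative symmetric operator $\Gamma$ (one can write $Q(x)=\tfrac12\|A^{\theta/2}\nabla(H_N)^\pm_k(x)\|^2$, with $\nabla(H_N)^\pm_k$ linear in $x$), and $\tr\Gamma=\expect Q$. Diagonalising $\Gamma$, the identity $\expect e^{\lambda Q}=\det(1-2\lambda\Gamma)^{-1/2}$, valid for $2\lambda\|\Gamma\|_{\mathrm{op}}<1$, together with the convexity bound $-\log(1-t)\le 2(\log2)\,t$ on $[0,\tfrac12]$, gives $\expect e^{\lambda Q}\le\exp\!\big(2(\log2)\lambda\tr\Gamma\big)$ whenever $2\lambda\|\Gamma\|_{\mathrm{op}}\le\tfrac12$. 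Since $\Gamma\ge 0$ one has $\|\Gamma\|_{\mathrm{op}}\le\tr\Gamma=\expect Q$, so it suffices to prove the two variance bounds
\[
\expect\cE^\theta((H_N)^\pm_k)(u_0)\lesssim|k|^{3-2\theta},\qquad
\expect\cE^\theta((H_N-H_M)^\pm_k)(u_0)\lesssim|k|^2 M^{1-2\theta},
\]
uniformly in $N$ and in $1\le M\le N$: then with $c_k=|k|^{2\theta-3}$ (resp. $c_k=|k|^{-2}M^{2\theta-1}$) one has $c_k\tr\Gamma\lesssim 1$, and choosing $\lambda$ small \emph{uniformly in $k,M,N$} makes $2\lambda c_k\|\Gamma\|_{\mathrm{op}}\le\tfrac12$, whence $\expect e^{\lambda c_k Q}\le\exp(2(\log2)\lambda c_k\tr\Gamma)\lesssim 1$, i.e.\ \eqref{eq:first-energy-estimate}--\eqref{eq:second-energy-estimate}.

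For the first variance bound I would differentiate the explicit formula for $(H_N)^\pm_k$, obtaining $D_q(H_N)^\pm_k(x)=2ik(\alpha_q x_{k-q}\mp\alpha_{-q}x_{-k-q})$ with $\alpha_q=\II_{|k|,|q|,|k-q|\le N}/(|q|^{2\theta}+|k-q|^{2\theta})$, and then, using $\expect x_m\overline{x_n}=\II_{m=n}$ and the fact that for $k\ne 0$ the two terms are uncorrelated, compute
\[
\expect\cE^\theta((H_N)^\pm_k)(u_0)\lesssim k^2\sum_{q\in\ZZ_0}\frac{|q|^{2\theta}}{(|q|^{2\theta}+|k-q|^{2\theta})^2}.
\]
The sum is estimated by comparison with $\int|q|^{2\theta}(|q|^{2\theta}+|k-q|^{2\theta})^{-2}\dd q$: the rescaling $q=|k|r$ extracts a factor $|k|^{1-2\theta}$ and leaves $\int_0^\infty r^{2\theta}(r^{2\theta}+|1-r|^{2\theta})^{-2}\dd r$, which is finite \emph{precisely because $\theta>1/2$} (the only issue being the tail $r\to\infty$, where the integrand decays like $r^{-2\theta}$; near $r=0$ and $r=1$ there is none), up to an $O(1)$ number of near-diagonal terms each $\lesssim|k|^{-2\theta}$. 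Hence $\expect\cE^\theta((H_N)^\pm_k)(u_0)\lesssim|k|^{3-2\theta}$.

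For the difference, the coefficient $\alpha_q$ is replaced by $\alpha_q\,\II_{M<\max(|k|,|q|,|k-q|)\le N}$, so it suffices to bound $k^2\sum_q|q|^{2\theta}(|q|^{2\theta}+|k-q|^{2\theta})^{-2}\II_{\max(|k|,|q|,|k-q|)>M}$. If $|k|>M$ the indicator is $\equiv 1$ and the previous bound already yields $\lesssim k^2|k|^{1-2\theta}\le k^2 M^{1-2\theta}$ since $1-2\theta<0$; if $|k|\le M$ the indicator forces $|q|>M$ or $|k-q|>M$, and in the first case the summand is $\le|q|^{-2\theta}$ with $\sum_{|q|>M}|q|^{-2\theta}\lesssim M^{1-2\theta}$, while in the second the denominator is $\gtrsim M^{4\theta}$ with $\sum_{|q|\le M}|q|^{2\theta}M^{-4\theta}\lesssim M^{1-2\theta}$. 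In all cases this gives $\expect\cE^\theta((H_N-H_M)^\pm_k)(u_0)\lesssim k^2 M^{1-2\theta}$. I expect the only genuinely delicate part of all this to be the book-keeping: keeping the $\pm$ (real/imaginary) pieces, the truncation indicators and the constraint $k\ne 0$ under control while evaluating the lattice sums, and invoking the ``$\max>M$'' localisation in the right regime. The exponential-moment mechanism and the reduction to the trace are soft and robust.
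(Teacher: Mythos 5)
Your proposal is correct, and it reaches the same two lattice-sum estimates as the paper ($I_N(k)\lesssim|k|^{3-2\theta}$ and $I_{N,M}(k)\lesssim|k|^2M^{1-2\theta}$, with $\theta>1/2$ entering exactly where you say it does), but the mechanism you use to convert the first-moment bound into an exponential-moment bound is genuinely different. The paper first majorizes the energy by a \emph{diagonal} quadratic form $h_N(x)=\sum_{k_1+k_2=k}c(k,k_1,k_2)|x_{k_2}|^2$ (absorbing the off-diagonal terms $x_{k-q}x_{k+q}+x_{-k+q}x_{-k-q}$ into the diagonal ones), and then applies Jensen's inequality to the convex combination $h_N=I_N(k)\sum \frac{c}{I_N(k)}|x_{k_2}|^2$, reducing everything to the exponential integrability of a single $\chi^2$ variable with a uniformly bounded coefficient $\lambda C|k|^{2\theta-3}I_N(k)\le \lambda C'$. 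You instead keep the full (non-diagonal) nonnegative quadratic form $Q=\la x,\Gamma x\ra$ and use the exact Gaussian identity $\expect e^{\lambda Q}=\det(1-2\lambda\Gamma)^{-1/2}$ together with $\|\Gamma\|_{\mathrm{op}}\le\tr\Gamma=\expect Q$ and the elementary bound $-\log(1-t)\le 2(\log 2)t$ on $[0,\tfrac12]$. Your route is slightly more general (it never needs the diagonal majorant, only positivity and a trace bound), at the cost of having to be a little careful that the complex coordinates with the reality constraint $x_{-k}=x_k^*$ are rewritten as a real Gaussian vector so that the determinant formula applies and so that $\tr\Gamma$ is normalized to equal $\expect Q$ — this only shifts constants and does not affect the uniformity in $k$, $M$, $N$. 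The combinatorial bookkeeping in your treatment of the difference term (splitting on which of $|k|$, $|q|$, $|k-q|$ exceeds $M$) matches the paper's decomposition of the indicator $\II_{\cdot\le N}-\II_{\cdot\le M}$ essentially verbatim.
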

\begin{proof}
We start by computing $\cE((H_N)^\pm_k)$: noting that
$$
D_q (H_{N})^\pm_k(x)=  i^\mp k  \left[  \frac{\II_{|k|,|q|,|k-q|\le N}}{|q|^{2\theta}+|k-q|^{2\theta}}  x_{k-q}\mp \frac{\II_{|k|,|q|,|k+q|\le N}}{|q|^{2\theta}+|k+q|^{2\theta}}  x_{-k-q}\right]
$$ 
we have
\begin{equation*}
\begin{split}
 \cE^\theta((H_N)^\pm_k)(x) & = \sum_{q\in\ZZ_0} |k|^2 |q|^{2\theta}\left[
 2 \frac{\II_{|k|,|q|,|k-q|\le N}}{(|q|^{2\theta}+|k-q|^{2\theta})^2}  |x_{k-q}|^2
 \right . \\
  & \left .
 \qquad \qquad \mp \frac{\II_{|k|,|q|,|k-q|\le N}}{|q|^{2\theta}+|k-q|^{2\theta}}  \frac{\II_{|k|,|q|,|k+q|\le N}}{|q|^{2\theta}+|k+q|^{2\theta}}  (x_{k-q} x_{k+q}+x_{-k+q} x_{-k-q})
 \right]
\end{split}
\end{equation*}
which gives the bound
$$
 \cE^\theta((H_N)^\pm_k)(x)\lesssim   |k|^2  \sum_{\substack{k_1,k_2 :k_1+k_2=k\\|k|,|k_1|,|k_2|\le N}} \frac{|k_1|^{2\theta}\II_{|k|,|k_1|,|k_2|\le N}}{(|k_1|^{2\theta}+|k_2|^{2\theta})^2}  |x_{k_2}|^2 
$$
$$
\lesssim  |k|^2  \sum_{\substack{k_1,k_2 :k_1+k_2=k\\|k|,|k_1|,|k_2|\le N}} \frac{\II_{|k|,|k_1|,|k_2|\le N}}{|k_1|^{2\theta}+|k_2|^{2\theta}}  |x_{k_2}|^2 =   \sum_{\substack{k_1,k_2 :k_1+k_2=k\\|k|,|k_1|,|k_2|\le N}}c(k,k_1,k_2) |x_{k_2}|^2  = h_N(x)
$$
where $c(k,k_1,k_2) = |k|^2/(|k_1|^{2\theta}+|k_2|^{2\theta})$.
Let $$
I_N(k) =  \sum_{\substack{k_1,k_2 :k_1+k_2=k\\|k|,|k_1|,|k_2|\le N}}  c(k,k_1,k_2) 
$$
and note that the sum in $I_N(k)$ can be bounded by the equivalent integral giving (uniformly in $N$)
$$
I_N(k) \lesssim  |k|^{2}  \int_{\RR} \frac{\dd q}{|q|^{2\theta}+|k-q|^{2\theta}} 
= |k|^{3-2\theta}  \int_{\RR} \frac{\dd q}{|q|^{2\theta}+|1-q|^{2\theta}} \lesssim |k|^{3-2\theta} 
$$
since that the last integral is finite for $\theta > 1/2$. Then
$$
\expect e^{\lambda |k|^{2\theta-3}\cE^\theta((H_N)^\pm_k)(u_0)} \le \expect e^{\lambda C|k|^{2\theta-3} h_N(u_0)}
$$
$$
 \le \sum_{\substack{k_1,k_2 :k_1+k_2=k\\|k|,|k_1|,|k_2|\le N}} c(k,k_1,k_2)  \expect \frac{e^{\lambda C |k|^{2\theta-3} I_N(k) |(u_0)_{k_2}|^2}}{I_N(k)}
\le \sum_{\substack{k_1,k_2 :k_1+k_2=k\\|k|,|k_1|,|k_2|\le N}} c(k,k_1,k_2)  \expect \frac{e^{\lambda C'|(u_0)_{k_2}|^2}}{I_N(k)} 
$$
where we used the previous bound to say that $C|k|^{2\theta-3} I_N(k) \le C'$ uniformly in $k$. Remind that $(u_0)_k$ has a Gaussian distribution of mean zero and unit variance. Therefore for $\lambda$ small enough $\expect e^{\lambda C'|(u_0)_{k_2}|^2}\lesssim 1$ uniformly in $k_2$ so that  
$$
\expect e^{\lambda |k|^{2\theta-3}\cE^\theta((H_N)^\pm_k)(u_0)} \lesssim 1.
$$
This establishes the claimed exponential bound for $\cE^\theta((H_N(x))_k^\pm)$. 
Similarly we have
$$
\cE^\theta((H_N-H_M)^\pm_k)(x)  \lesssim \sum_{k_1,k_2 :k_1+k_2=k} (\II_{|k|,|k_1|,|k_2|\le N}-\II_{|k|,|k_1|,|k_2|\le M})^2 c(k,k_1,k_2) | x_{k_2}|^2 .
 $$
Let
$$
I_{N,M}(k) =\sum_{k_1,k_2 :k_1+k_2=k} (\II_{|k|,|k_1|,|k_2|\le N}-\II_{|k|,|k_1|,|k_2|\le M})^2 c(k,k_1,k_2)
$$
and note that, for $N\ge M$,
$$
(\II_{|k|,|k_1|,|k_2|\le N}-\II_{|k|,|k_1|,|k_2|\le M}) \lesssim \II_{|k|,|k_1|,|k_2|\le N}(\II_{|k|> M}+\II_{|k_1|> M}+\II_{|k_2|> M}).
$$
Then, by estimating the sums with the corresponding integrals and after easy simplifications we remain with the following bound
$$
I_{N,M}(k)\lesssim |k|^{2} \II_{|k|> M} \int_{\RR} \frac{\dd q}{|q|^{2\theta}+|k-q|^{2\theta}} 
+ |k|^{2}  \int_{\RR} \frac{\II_{|q|> M}\dd q}{|q|^{2\theta}+|k-q|^{2\theta}} 
$$
The first integral in the r.h.s.~is easily handled by
$$
|k|^{2} \II_{|k|> M} \int_{\RR} \frac{\dd q}{|q|^{2\theta}+|k-q|^{2\theta}} \lesssim |k|^{3-2\theta} \II_{|k|> M} \lesssim |k|^2 M^{1-2\theta}
$$
since $\theta > 1/2$. For the second we have the analogous bound
$$
|k|^{2}  \int_{\RR} \frac{\II_{|q|> M}\dd q}{|q|^{2\theta}+|k-q|^{2\theta}} \lesssim 
|k|^{2}  \int_{\RR} \frac{\II_{|q|> M}\dd q}{|q|^{2\theta}} \lesssim |k|^2 M^{1-2\theta}
$$
which concludes the proof.
\end{proof}

Using Lemma~\ref{lemma:ito-trick} and the estimates contained in Lemma~\ref{lemma:energy-estimates} we are led to the next set of more refined estimates for the drift and his small scale contributions.

\begin{lemma}
\label{lemma:main-bounds}
Let 
$
G^M_t = \int_0^t F_{M}(u_s) \dd  s
$.
 For any $M\le N$ we have
\begin{equation}
\label{eq:basic-est-1}
\|\sup_{t\in[0,T]} \left|(G^M_t)_k\right| \|_{L^p(\PP_\mu)}\lesssim_p |k| M T ,
\end{equation}
\begin{equation}
\label{eq:basic-est-2}
\|\sup_{t\in[0,T]} \left|(G^M_t)_k\right| \|_{L^p(\PP_\mu)}\lesssim_p |k|^{3/2-\theta} T^{1/2} ,
\end{equation}
\begin{equation}
\label{eq:basic-est-3}
\|\sup_{t\in[0,T]}\left|(G^M_t)_k-(G^N_t)_k\right| \|_{L^p(\PP_\mu)}\lesssim_p |k| T^{1/2} M^{1/2-\theta} ,
\end{equation}
\begin{equation}
\label{eq:basic-est-4}
\sup_{M\ge 0}\|\sup_{t\in[0,T]}\left|(G^M_t)_k\right| \|_{L^p(\PP_\mu)}\lesssim_p   |k| T^{2\theta/(1+2\theta)} .
\end{equation}
\end{lemma}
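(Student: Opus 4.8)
The plan is to recognize $G^M_t$ as a time integral of the generator $L_0$ applied to the corrector $H_M$, and then to feed the energy bounds of Lemma~\ref{lemma:energy-estimates} into the It\^o trick of Lemma~\ref{lemma:ito-trick}; estimate~\eqref{eq:basic-est-1} is instead elementary, and~\eqref{eq:basic-est-4} follows by interpolating~\eqref{eq:basic-est-1} against~\eqref{eq:basic-est-3}. The starting observation is that $L_0$ acts coordinatewise on Fourier expansions and that, as established just before Lemma~\ref{lemma:energy-estimates}, $D_q D_{-q}(H_M)_k\equiv 0$ for all $q$ while $L_0 H_M(\Pi_M x)=F_M(x)$; hence for every $N\ge M$ one has $(G^M_t)^\pm_k=\int_0^t L_0 (H_M)^\pm_k(\Pi_N u_s)\,\dd s$ with $(H_M)^\pm_k$ a smooth cylinder function on $\Pi_N H$, and similarly $(G^M_t)^\pm_k-(G^N_t)^\pm_k=\int_0^t L_0(H_M-H_N)^\pm_k(\Pi_N u_s)\,\dd s$ since $L_0(H_M-H_N)(\Pi_N x)=F_M(x)-F_N(x)$.

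For~\eqref{eq:basic-est-2} I would apply Lemma~\ref{lemma:ito-trick} with $h=(H_M)^\pm_k$, obtaining
$$
\big\|\,\sup_{t\in[0,T]}|(G^M_t)^\pm_k|\,\big\|_{L^p(\PP_\mu)}\lesssim_p T^{1/2}\,\|\cE^\theta((H_M)^\pm_k)\|^{1/2}_{L^{p/2}(\mu)},
$$
and then observe that the exponential bound~\eqref{eq:first-energy-estimate} controls all moments of the nonnegative variable $\cE^\theta((H_M)^\pm_k)(u_0)$, so that $\|\cE^\theta((H_M)^\pm_k)\|_{L^{p/2}(\mu)}\lesssim_p |k|^{3-2\theta}$; using $|(G^M_t)_k|\le |(G^M_t)^+_k|+|(G^M_t)^-_k|$ gives~\eqref{eq:basic-est-2}. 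Estimate~\eqref{eq:basic-est-3} is proved identically, now applying Lemma~\ref{lemma:ito-trick} to $h=(H_M-H_N)^\pm_k$ and invoking~\eqref{eq:second-energy-estimate}, which yields $\|\cE^\theta((H_N-H_M)^\pm_k)\|_{L^{p/2}(\mu)}\lesssim_p |k|^2 M^{1-2\theta}$.

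Estimate~\eqref{eq:basic-est-1} does not use the It\^o trick: by the triangle inequality and stationarity $\|\sup_{t\le T}|(G^M_t)_k|\|_{L^p(\PP_\mu)}\le\int_0^T\|(F_M(u_s))_k\|_{L^p(\PP_\mu)}\,\dd s=T\,\|(F_M(u_0))_k\|_{L^p(\PP_\mu)}$, and $(F_M(u_0))_k= ik\sum_{k_1+k_2=k,\,|k_1|,|k_2|\le M}(u_0)_{k_1}(u_0)_{k_2}$ is a sum of $O(M)$ products of standard complex Gaussians, each of $L^p$-norm $\lesssim_p 1$. For~\eqref{eq:basic-est-4} I would argue by cases. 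If $T\ge 1$, then~\eqref{eq:basic-est-2} already suffices, since $3/2-\theta\le 1$ and $1/2\le 2\theta/(1+2\theta)$ for $\theta>1/2$, so $|k|^{3/2-\theta}T^{1/2}\le |k|T^{2\theta/(1+2\theta)}$. If $T<1$, fix $M$ and set $L=\lceil T^{-1/(1+2\theta)}\rceil$: when $L\ge M$, \eqref{eq:basic-est-1} gives $\lesssim_p |k|MT\le|k|LT\lesssim |k|T^{2\theta/(1+2\theta)}$; when $L<M$, split $G^M_t=G^L_t+(G^M_t-G^L_t)$ and combine~\eqref{eq:basic-est-1} for $G^L$ ($\lesssim_p |k|LT$) with~\eqref{eq:basic-est-3} ($\lesssim_p |k|T^{1/2}L^{1/2-\theta}$), the value of $L$ being chosen exactly to balance $LT$ and $T^{1/2}L^{1/2-\theta}$, both of order $T^{2\theta/(1+2\theta)}$.

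The only genuinely non-routine ingredient is the interpolation producing~\eqref{eq:basic-est-4}: spotting the truncation level $L\sim T^{-1/(1+2\theta)}$ and handling separately the regimes $T\ge 1$ and $L>M$, where one must fall back on~\eqref{eq:basic-est-2} or~\eqref{eq:basic-est-1} directly. Everything else is a mechanical combination of Lemmas~\ref{lemma:ito-trick} and~\ref{lemma:energy-estimates} with stationarity, once the coordinatewise representation of $G^M_t$ through the corrector $H_M$ is in place.
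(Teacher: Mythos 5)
Your proposal is correct and follows essentially the same route as the paper: the It\^o trick of Lemma~\ref{lemma:ito-trick} applied to the correctors $(H_M)^\pm_k$ and $(H_M-H_N)^\pm_k$ combined with the energy estimates of Lemma~\ref{lemma:energy-estimates} for \eqref{eq:basic-est-2}--\eqref{eq:basic-est-3}, a direct stationarity bound for \eqref{eq:basic-est-1}, and the splitting at the truncation level $\sim T^{-1/(1+2\theta)}$ for \eqref{eq:basic-est-4}. The only cosmetic differences are that the paper obtains \eqref{eq:basic-est-1} via an exact $L^2(\mu)$ computation plus Gaussian hypercontractivity rather than your term-by-term triangle inequality (both give $|k|M$), and your case analysis for \eqref{eq:basic-est-4} makes explicit the edge cases the paper leaves implicit in its ``optimal choice $N\sim T^{-1/(1+2\theta)}$''.
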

\begin{proof}
The Gaussian measure $\mu$ satisfies the hypercontractivity estimate (see for example~\cite{janson_gaussian_1997}): for any complex-valued finite order polynomial $P(x)\in\cyl$ we have
\begin{equation}
\label{eq:hypercontractivity}
\left\| P(x) \right\|_{L^p(\mu)}\lesssim_p \left\| P(x) \right\|_{L^2(\mu)}.
\end{equation}
Then we have $(F_M(x))_k = ik \sum_{k_1+k_2=k} x_{k_1} x_{k_2}$ and for all $k\neq 0$
$$
\int |(F_M(x))_k|^2 \mu(\dd x) = |k|^2 \sum_{k_1+k_2=k} \sum_{k'_1+k'_2=k} \II_{|k_1|,|k_2|,|k'_1|,|k'_2|\le M}\int x_{k_1} x_{k_2} x_{k'_1}^* x_{k'_2}^* \mu(\dd x)  
$$
$$
=4 |k|^2 M^2
$$
This allows us to obtain the bound~\eqref{eq:basic-est-1}. Indeed
$$
\|\sup_{t\in[0,T]}\left| (G^M_t)_k \right| \|_{L^p(\PP_\mu)}
\lesssim 
\int_0^T \left\| (F_{M}(u_s))_k \right\|_{L^p(\PP_\mu)}\dd  s 
$$
$$\lesssim 
T \left\| (F_{M}(\cdot))_k  \right\|_{L^p(\mu)}
\lesssim_p T \left\| (F_{M}(\cdot))_k  \right\|_{L^2(\mu)} \lesssim_p
  |k| M T.
$$
For the bound~\eqref{eq:basic-est-2} we use the fact that $L_0 H_N = F_N$ and Lemma~\ref{lemma:ito-trick}  to get 
\begin{equation*}
\|\sup_{t\in[0,T]} \left|(G^M_t)_k\right| \|_{L^p(\PP_\mu)}\lesssim_p T^{1/2} \sup_{t\in[0,T]}
\| \cE^\theta(H_N(\cdot)) \|_{L^{p/2}(\mu)}^{1/2}
\lesssim
|k|^{3/2-\theta} T^{1/2}
\end{equation*}
where we used the first energy estimate~\eqref{eq:first-energy-estimate}
 of Lemma~\ref{lemma:energy-estimates} and the fact that
$
\|Q\|_{L^p(\mu)}^p \lesssim_p \int [e^{Q(x)^+}+e^{Q(x)^-}] \mu(\dd x)
$
where again $Q^\pm$ are the real and imaginary parts of $Q$. 
The bound~\eqref{eq:basic-est-3} is obtained in the same way using the second energy estimate~\eqref{eq:second-energy-estimate}. 
Finally  the last bound~\eqref{eq:basic-est-4} is obtained from the previous two by taking $0\le N\le M$, decomposing $F_M(x) = F_N(x)-F_{N,M}(x)$:
$$
\|\sup_{t\in[0,T]} \left|(G^M_t)_k\right| \|_{L^p(\PP_\mu)} \le \|\sup_{t\in[0,T]} \left|(G^N_t)_k\right| \|_{L^p(\PP_\mu)}+\|\sup_{t\in[0,T]} \left|(G^M_t)_k-(G^N_t)_k\right| \|_{L^p(\PP_\mu)}
$$
$$
\lesssim_p |k|  ( N T+  N^{1/2-\theta} T^{1/2})
$$
and performing the optimal choice $N \sim T^{-1/(1+2\theta)}$.
\end{proof}

Analogous estimates go through also for the functions obtained via convolution with the $e^{-A^\theta t}$ semi-group.
\begin{lemma}
\label{lemma:main-bounds-conv}
Let
$$
\tilde G^M_t = \int_0^t e^{-A^\theta (t-s)} F_{M}(u_s) \dd  s
$$
then for any $M\le N$ we have
\begin{equation}
\label{eq:basic-est-1-conv}
\|(\tilde G^M_t)_k\|_{L^p(\PP_\mu)}\lesssim_p |k| M \left(\frac{1-e^{-2k^{2\theta} t/2}}{2k^{2\theta}}\right)
\end{equation}
\begin{equation}
\label{eq:basic-est-2-conv}
\|(\tilde G^M_t)_k\|_{L^p(\PP_\mu)}\lesssim_p |k|^{3/2-\theta} \left(\frac{1-e^{-2k^{2\theta} t/2}}{2k^{2\theta}}\right)^{1/2}
\end{equation}
\begin{equation}
\label{eq:basic-est-3-conv}
\|(\tilde G^M_t)_k-(\tilde G^N_t)_k\|_{L^p(\PP_\mu)}\lesssim_p |k| M^{1/2-\theta} \left(\frac{1-e^{-2k^{2\theta} t/2}}{2k^{2\theta}}\right)^{1/2}
\end{equation}
\end{lemma}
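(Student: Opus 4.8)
The plan is to transcribe, mode by mode, the proof of Lemma~\ref{lemma:main-bounds}, replacing the plain It\^o trick~\eqref{eq:ito-trick} by its convolution refinement~\eqref{eq:ito-trick-conv}. Fix $k\in\ZZ_0$; since $e^{-A^\theta(t-s)}$ is diagonal in the Fourier basis,
\[
(\tilde G^M_t)_k = \int_0^t e^{-|k|^{2\theta}(t-s)}\,(F_M(u_s))_k\,\dd s .
\]
For~\eqref{eq:basic-est-1-conv} I would bound $\|(\tilde G^M_t)_k\|_{L^p(\PP_\mu)}$ by $\int_0^t e^{-|k|^{2\theta}(t-s)}\|(F_M(u_s))_k\|_{L^p(\PP_\mu)}\,\dd s$, then use stationarity of $u$, the hypercontractivity bound~\eqref{eq:hypercontractivity}, and the identity $\int |(F_M(x))_k|^2\mu(\dd x) = 4|k|^2 M^2$ already computed in the proof of Lemma~\ref{lemma:main-bounds}, to get $\|(F_M(u_s))_k\|_{L^p(\PP_\mu)}\lesssim_p |k| M$; since $\int_0^t e^{-|k|^{2\theta}(t-s)}\dd s = (1-e^{-|k|^{2\theta}t})/|k|^{2\theta}$, this is the claimed prefactor up to a constant.

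For~\eqref{eq:basic-est-2-conv} I would use $L_0 H_M = F_M$ and the fact that $L_0$ acts only on the space variable to write, for the real and imaginary parts,
\[
(\tilde G^M_t)^\pm_k = \int_0^t e^{-|k|^{2\theta}(t-s)}\,\big(L_0 H_M(u_s)\big)^\pm_k\,\dd s ,
\]
which is exactly the form to which~\eqref{eq:ito-trick-conv} applies, with $a = -|k|^{2\theta}$ and time horizon $t$ in place of $T$: the representation~\eqref{eq:key-representation} on the sub-interval $[0,t]$ is available by stationarity of $u$, and the weight $e^{a(t-s)}$ only multiplies the integrand pointwise in $s$, so the Burkholder--Davis--Gundy step runs with the quadratic variations of $M^\pm$ weighted by $e^{2a(t-s)}$. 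This yields
\[
\|(\tilde G^M_t)^\pm_k\|_{L^p(\PP_\mu)} \lesssim_p \Big(\frac{1-e^{-2|k|^{2\theta}t}}{2|k|^{2\theta}}\Big)^{1/2}\,\big\|\cE^\theta((H_M)^\pm_k)\big\|_{L^{p/2}(\mu)}^{1/2},
\]
and the first energy estimate~\eqref{eq:first-energy-estimate} of Lemma~\ref{lemma:energy-estimates}, converted from an exponential bound to an $L^{p/2}(\mu)$ bound exactly as in the proof of Lemma~\ref{lemma:main-bounds}, gives $\|\cE^\theta((H_M)^\pm_k)\|_{L^{p/2}(\mu)}\lesssim_p |k|^{3-2\theta}$. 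Recombining $(\tilde G^M_t)_k$ from its two parts and using $1-e^{-2x}\le 2(1-e^{-x})$ for $x\ge 0$ to match the prefactor in the statement completes~\eqref{eq:basic-est-2-conv}.

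For~\eqref{eq:basic-est-3-conv} I would run the identical argument with $H_M$ replaced by $H_M - H_N$, so that $F_M - F_N = L_0(H_M - H_N)$, and invoke the second energy estimate~\eqref{eq:second-energy-estimate}, which after the same passage to moments gives $\|\cE^\theta((H_N-H_M)^\pm_k)\|_{L^{p/2}(\mu)}\lesssim_p |k|^2 M^{1-2\theta}$, hence the square-root factor $|k| M^{1/2-\theta}$ together with the convolution prefactor. The only genuinely new point compared with Lemma~\ref{lemma:main-bounds} is checking that the forward/backward martingale representation~\eqref{eq:key-representation}, and hence~\eqref{eq:ito-trick-conv}, remain valid after inserting the time-dependent weight $e^{a(t-s)}$ and restricting to the sub-horizon $[0,t]$; both are routine given stationarity, so the estimate carries no new analytic difficulty beyond bookkeeping of the time-dependent prefactor. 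Note also that here no supremum over $t$ is taken, which is exactly what allows the sharper, non-uniform-in-$t$ estimate~\eqref{eq:ito-trick-conv} to be used; an analogue of~\eqref{eq:basic-est-4} would follow, if one wanted it, from the same interpolation as in the proof of Lemma~\ref{lemma:main-bounds}.
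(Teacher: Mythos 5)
Your proposal is correct and is exactly the route the paper takes: its proof of this lemma is a one-line remark that one repeats the argument of Lemma~\ref{lemma:main-bounds} with the convolution estimate~\eqref{eq:ito-trick-conv} (applied with $a=-|k|^{2\theta}$ to $h(s,x)=e^{a(t-s)}(H_M)^\pm_k(x)$, resp.\ $(H_M-H_N)^\pm_k$) in place of~\eqref{eq:ito-trick}, together with the two energy estimates of Lemma~\ref{lemma:energy-estimates} and, for the first bound, the direct $L^2(\mu)$ computation of $(F_M)_k$ plus hypercontractivity. You have in fact supplied more detail than the paper does, including the correct identification of the only point needing a check (the forward/backward martingale representation for time-dependent cylinder functions, which is already built into~\eqref{eq:key-representation} and~\eqref{eq:ito-trick-conv}).
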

\begin{proof}
The proof follows the line of Lemma~\ref{lemma:main-bounds} using eq.~\eqref{eq:ito-trick-conv} instead of eq.~\eqref{eq:ito-trick}.  
\end{proof}

\begin{corollary}
For all sufficiently small $\eps > 0$
\begin{equation}
\label{eq:basic-est-4-conv}
\sup_{N\ge 0}\|(\tilde G^N_t)_k - (\tilde G^N_s)_k\|_{L^p(\PP_\mu)} \lesssim_{p} |k|^{3/2-2\theta+2\eps \theta} (t-s)^\eps
\end{equation}
\end{corollary}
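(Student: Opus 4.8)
The plan is to reduce the time increment $(\tilde G^N_t)_k-(\tilde G^N_s)_k$ to quantities already controlled in Lemma~\ref{lemma:main-bounds-conv}, using the semigroup structure of the mild convolution. For $s\le t$ the splitting $\int_0^t=\int_0^s+\int_s^t$ together with $e^{-A^\theta(t-r)}=e^{-A^\theta(t-s)}e^{-A^\theta(s-r)}$ gives
\begin{equation*}
\tilde G^N_t-\tilde G^N_s=\big(e^{-A^\theta(t-s)}-\mathrm{Id}\big)\tilde G^N_s+\int_s^t e^{-A^\theta(t-r)}F_N(u_r)\,\dd r .
\end{equation*}
I will estimate each term in the $k$-th Fourier coordinate in $L^p(\PP_\mu)$ and add up. The elementary input throughout is the inequality $1-e^{-x}\le\min(1,x)\le x^{\beta}$, valid for $x\ge 0$ and $0\le\beta\le 1$, which lets one trade decay of the semigroup in $|k|$ for a power of $t-s$.

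For the second term, stationarity of $u$ in time (a defining property of $\mathcal R_\theta$) implies that $\int_s^t e^{-A^\theta(t-r)}F_N(u_r)\,\dd r$ has the same law as $\tilde G^N_{t-s}$; since this is a reduction at the level of a measurable functional of the path, it passes to each Fourier mode, and \eqref{eq:basic-est-2-conv} yields
\begin{equation*}
\Big\|\Big(\int_s^t e^{-A^\theta(t-r)}F_N(u_r)\,\dd r\Big)_k\Big\|_{L^p(\PP_\mu)}\lesssim_p |k|^{3/2-\theta}\Big(\tfrac{1-e^{-k^{2\theta}(t-s)}}{2k^{2\theta}}\Big)^{1/2}.
\end{equation*}
Writing $\tfrac{1-e^{-k^{2\theta}(t-s)}}{2k^{2\theta}}=(t-s)\,\tfrac{1-e^{-x}}{2x}$ with $x=k^{2\theta}(t-s)$ and applying the elementary inequality with $\beta=1-2\eps$ (whence the restriction $\eps\le 1/2$) gives $\big(\tfrac{1-e^{-k^{2\theta}(t-s)}}{2k^{2\theta}}\big)^{1/2}\lesssim|k|^{-\theta+2\eps\theta}(t-s)^{\eps}$, so the second term is $\lesssim_p |k|^{3/2-2\theta+2\eps\theta}(t-s)^{\eps}$.

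For the first term, the operator $e^{-A^\theta(t-s)}-\mathrm{Id}$ acts in Fourier as multiplication by $e^{-k^{2\theta}(t-s)}-1$, of modulus at most $(k^{2\theta}(t-s))^{\eps}$ by the same inequality with exponent $\eps\le 1$; and bounding $1-e^{-k^{2\theta}s}\le 1$ in \eqref{eq:basic-est-2-conv} gives $\|(\tilde G^N_s)_k\|_{L^p(\PP_\mu)}\lesssim_p |k|^{3/2-\theta}(2k^{2\theta})^{-1/2}\lesssim|k|^{3/2-2\theta}$. Multiplying, the first term is again $\lesssim_p |k|^{3/2-2\theta+2\eps\theta}(t-s)^{\eps}$, and summing the two contributions gives the claim. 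Uniformity in $N$ is automatic because the right-hand side of \eqref{eq:basic-est-2-conv} is independent of the truncation, and the only constraint is $0<\eps\le 1/2$. There is no serious obstacle here; the one point deserving care is the correct use of time-stationarity of $u$ to identify the forward convolution over $[s,t]$ with $\tilde G^N_{t-s}$ in distribution, Fourier mode by Fourier mode.
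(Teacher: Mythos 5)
Your proof is correct and follows essentially the same route as the paper's: the same splitting of the increment into $\bigl(e^{-A^\theta(t-s)}-\mathrm{Id}\bigr)\tilde G^N_s$ plus the convolution over $[s,t]$, both terms controlled via \eqref{eq:basic-est-2-conv}; the paper merely phrases the final step as interpolation between the $(t-s)^{1/2}$ bound and the uniform-in-time bound $|k|^{3/2-2\theta}$, whereas you absorb the interpolation into the elementary inequality $1-e^{-x}\le x^\beta$. One small slip: in the second term the exponent should be $\beta=2\eps$ (applied directly to $1-e^{-k^{2\theta}(t-s)}$), not $\beta=1-2\eps$; with $\beta=1-2\eps$ your computation actually yields $|k|^{-2\theta\eps}(t-s)^{1/2-\eps}$, which is the stated form with $\eps$ replaced by $1/2-\eps$ — harmless for the conclusion, but the algebra as written does not produce the displayed factor $|k|^{-\theta+2\eps\theta}(t-s)^{\eps}$.
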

\begin{proof}
To control the time regularity of the drift convolution we consider $0\le s \le t$ and decompose
$$
\|(\tilde G^N_t)_k - (\tilde G^N_s)_k\|_{L^p(\PP_\mu)}
$$
$$
\le \| \int_s^t (e^{-A^\theta(t-r)} F_{N}(u_r))_k \dd  r\|_{L^p(\PP_\mu)} + (e^{-k^{2\theta}(t-s)}-1)\|(\tilde G^N_s)_k\|_{L^p(\PP_\mu)}
$$
$$
\lesssim |k|^{3/2-\theta} (t-s)^{1/2} +|k|^{3/2-2\theta}(e^{-k^{2\theta}(t-s)}-1)
\lesssim |k|^{3/2-\theta} (t-s)^{1/2}
$$
Moreover a direct consequence of eq.~\eqref{eq:basic-est-2-conv} is
$$
\sup_{t\in[0,T]} \|(\tilde G^N_t)_k \|_{L^p(\PP_\mu)}\lesssim_p |k|^{3/2-2\theta}.
$$
which give us a uniform estimate in the form
$$
\|(\tilde G^N_t)_k - (\tilde G^N_s)_k\|_{L^p(\PP_\mu)} \le \|(\tilde G^N_t)_k\|_{L^p(\PP_\mu)}+\|(\tilde G^N_s)_k\|_{L^p(\PP_\mu)}   \lesssim_{p} |k|^{3/2-2\theta}
$$
By interpolation we get the claimed bound. 
\end{proof}

\begin{remark}
All these $L^p$ estimates can be replaced with equivalent exponential estimates. For example it is not difficult to prove that for small $\lambda$ we have
$$
\sup_{t\in[0,T]} \sup_{k\in\ZZ_0} \expect \exp\left(\lambda |k|^{2\theta-3/2} (\tilde G^N_t)^\pm_k \right) \lesssim 1
$$
where $(\cdot)^\pm$ denote, as before, the real and imaginary parts, respectively.
\end{remark} 

At this point we are in position to prove Lemma~\ref{lemma:burgers-drift} on the existence of the Burgers' drift for controlled processes.

\begin{proof}(of Lemma~\ref{lemma:burgers-drift})
Let $\mathcal{B}^\eps_t = \int_0^t  F(\rho^\eps* u_s) \dd s$. 
We start by noting that since $\hat \rho$ has a bounded support we have $\rho^\eps * (\Pi_N u_s) = \rho^\eps * u_s$ for  all $N \ge C/\eps$ for some constant $C$ and $\eps$ small enough. Moreover all the computation we made for $F_N$ remains true for the functions $F_{\eps,N}(x) = F(\rho^\eps * \Pi_N x)$ so we have estimates analogous to that in  Lemma~\ref{lemma:main-bounds} for 
$G^{\eps,M}_t = \int_0^t \int_0^t  F(\rho^\eps* \Pi_M u_s) \dd s$. In taking $\eps>\eps'>0$ and $N\ge C/\eps$, $M\ge C/\eps'$ and $M\ge N$ we have
$$
\left\|\sup_{t\in[0,T]}\left|(\mathcal{B}^\eps_t)_k-(\mathcal{B}^{\eps'}_t)_k\right| \right\|_{L^p(\PP_\mu)}
=
\left\|\sup_{t\in[0,T]}\left|(G^{\eps,N}_t)_k-(G^{\eps',M}_t)_k\right|\right \|_{L^p(\PP_\mu)}
$$
$$
\lesssim_p |k| T^{1/2} M^{1/2-\theta} \lesssim_p |k| T^{1/2} (\eps')^{\theta-1/2} 
$$
uniformly in $\eps,\eps',N,M$. This easily implies that the sequence of processes $(\mathcal{B}^\eps)_{\eps}$ converges almost surely to a limit in $C(\RR_+,\cF L^{-1-\eps,\infty})$ if $\theta>1/2$. By similar arguments it can be shown that the limit does not depend on the function $\rho$. 
\end{proof}

\section{Existence of controlled solutions}
\label{sec:existence}

Fix $\alpha < 1/2$ and consider the SDE on $H^\alpha$ given by
\begin{equation}
\label{eq:burgers-reg}
\dd u^N_t = - A^\theta u^N_t  \dd t + F_N(u^N_t)\dd t + A^{\theta/2} \dd W_t,
\end{equation}
where  $F_N : H\to H$ is defined by $F_N(x) = \frac12  \Pi_N B (\Pi_N x)^2$. Global solution of this equation starting from any $u_0^N\in H^\alpha$  can be constructed as follows.  Let $(Z_t)_{t\ge 0}$ the unique OU process on $H^\alpha$ which satisfies the SDE
\begin{equation}
\label{eq:ou}
\dd Z_t = - A^\theta Z_t  \dd t + A^{\theta/2} \dd W_t.
\end{equation}
with initial condition $Z_0 = u^N_0$. Let $(v^N_t)_{t\ge 0}$ the unique solution taking values in the finite dimensional vector space $\Pi_N H$ of the following SDE
$$
\dd v^N_t = - A^\theta v^N_t  \dd t + F_N(v^N_t)\dd t + A^{\theta/2}\dd  \Pi_N  W_t,
$$
with initial condition $v^N_0 = \Pi_N u^N_0$. Note that this SDE has global solutions despite of the quadratic non-linearity. Indeed the vector field $F_N$ preserves the $H$ norm: 
$$
\langle v^N_t,F_N(v^N_t)\rangle = \langle v^N_t,B (v^N_t)^2\rangle = \frac13 \int_\TT \partial_\xi(v^N_t(\xi))\dd \xi = 0 
$$
and by It\^o formula we have
$$
\dd \|v^N_t\|_H^2 = 2 \langle v^N_t,- A^\theta v^N_t  \dd t + F_N(v^N_t)\dd t + A^{\theta/2}\dd  \Pi_N  W_t \rangle + C_N \dd t
$$
$$
  = -2  \|A^{\theta/2} v^N_t\|^2_H \dd t + 2 \langle v^N_t, A^{\theta/2}\dd  \Pi_N  W_t \rangle + C_N \dd t
$$
where $C_N = [A^{\theta/2}\Pi_N W]_t = \sum_{0<|k|\le N} |k|^{2\theta}$. From this equation we easily obtain that for any  initial condition $v^N_0$ the process $(\|v^N_t\|_H)_{t\in[0,T]}$ is almost surely finite for any $T \ge 0$ which implies that the unique solution $(v^N_t)_{t \ge 0}$ can be extended to arbitrary intervals of time. Setting $u^N_t = v^N_t + (1-\Pi_N)Z_t$ we obtain a global solution of eq.~\eqref{eq:burgers-reg}. Moreover the diffusion $(u^N_t)_{t\ge 0}$ has  generator
$$
L_N \varphi(x) = L_0\varphi(x)+  \sum_{k\in\ZZ_0, |k|\le N} (F_N(x))_k D_k \varphi(x)
$$
where $L_0$ 
is the generator of the Ornstein--Uhlenbeck defined in eq.~\eqref{eq:generator-ou} and which satisfies the integration by parts formula
$
\mu [\varphi L_0 \varphi] = \mu[ \cE(\varphi)]
$ for $\varphi\in\cyl$.
This diffusion preserves the Gaussian measure $\mu$. Indeed if we take $u_0^N$ distributed according to the white noise $\mu$ we have that $((1-\Pi_N)Z_t)_{t \ge 0}$ is independent of $(v^N_t)_{t\ge 0}$. Moreover $Z_t$ has law $\mu$ of any $t\ge 0$ and an easy argument for the finite dimensional diffusion $(v^N_t)_{t\ge 0}$ shows that for any $t\ge0$ the random variable $v^N_t$ is distributed according to $\mu^N = (\Pi_N)_* \mu$: the push forward of the measure $\mu$ with respect to the projection $\Pi_N$.

We will use the fact that $u^N$ satisfy the  mild equation~\cite{DZ} 
\begin{equation}
\label{eq:burgers-reg-mild}
u^N_t = e^{-A^\theta t} u_0 +   \int_0^t e^{-A^\theta (t-s)} F_N(u^N_s) \dd s + A^{\theta/2} \int_0^t e^{-A^\theta (t-s)}  \dd W_s
\end{equation}
 where the stochastic convolution in the r.h.s is given by
$$
A^{\theta/2} \int_0^t e^{-A^\theta (t-s)}  \dd W_s = \sum_{k\in\ZZ_0} |k|^{\theta} e_k \int_0^t e^{-|k|^{2\theta} (t-s)} \dd \beta^k_s .
$$

\begin{lemma}
Let
$$
\cA_t^{N}=\int_0^t F_{N}(u^{N}_s) \dd s ,\qquad \tilde \cA_t^{N}=\int_0^t e^{-A^\theta(t-s)} F_{N}(u^{N}_s) \dd s . 
$$
and set $\sigma=(3/2-2\theta)_+$.
The family of laws of the processes $\{(u^N,\cA^N,\tilde \cA^N,W)\}_N$ is tight in the space of continuous functions with values in $\mathcal{X}=\cF L^{\infty,\sigma-\eps}\times \cF L^{\infty,3/2-\theta-\eps}\times \cF L^{\infty,3/2-2\theta-\eps}\times \cF L^{\infty,-\eps}$ for all small $\eps > 0$.
\end{lemma}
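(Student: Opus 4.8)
The plan is to deduce tightness from a Kolmogorov--type criterion applied to each Fourier coordinate separately, together with the elementary fact that a bounded subset of $\cF L^{\infty,\alpha}$ is relatively compact as soon as its Fourier tails $\sup_{|k|>M}|k|^\alpha|x_k|$ tend to $0$ uniformly as $M\to\infty$. Concretely, for each of the four processes $X^N\in\{u^N,\cA^N,\tilde\cA^N,W\}$ I would establish, uniformly in $N$ and for every $p\ge 1$, estimates of the shape
\[
\sup_{t\in[0,T]}\big\|(X^N_t)_k\big\|_{L^p(\PP_\mu)}\lesssim_p r_k,\qquad\big\|(X^N_t)_k-(X^N_s)_k\big\|_{L^p(\PP_\mu)}\lesssim_p r_k\,|t-s|^{\kappa},
\]
with a fixed $\kappa>0$ and a power--type weight $r_k$ satisfying $\sum_k(|k|^{\alpha}r_k)^{p}<\infty$ for $p$ large, $\alpha$ being the regularity index attached to that component in $\mathcal X$. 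The Kolmogorov continuity theorem in its Banach--valued form then gives $\sup_N\expect\|X^N\|_{\cC^{\gamma}_T\cF L^{\infty,\alpha'}}^p<\infty$ for some $\gamma>0$ and $\alpha'>\alpha$, and the compact embedding $\cC^{\gamma}_T\cF L^{\infty,\alpha'}\hookrightarrow\hookrightarrow\cC_T\cF L^{\infty,\alpha}$ yields tightness of $\{X^N\}$; tightness of the joint law in $\mathcal X$ follows since $\mathcal X$ is a finite product.

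The two ``noise--like'' components are immediate. For $W$ one has $(W_t)_k=\beta^k_t$, so $r_k\equiv1$, $\kappa=1/2$, and $\sum_k|k|^{-\eps p}<\infty$ for $p>1/\eps$. For $u^N$ I would use the mild equation~\eqref{eq:burgers-reg-mild}: stationarity makes the one--time law equal to $\mu$, so the tail bound holds with $r_k\equiv1$, while the time increments are controlled summand by summand --- $\|((e^{-A^\theta t}-e^{-A^\theta s})u_0)_k\|_{L^p}\lesssim_p(|k|^{2\theta}|t-s|)^{\delta}$ for any $\delta\in(0,1]$, an analogous Gaussian bound for the increments of the stochastic convolution term, and \eqref{eq:basic-est-4-conv} for the contribution of $\tilde\cA^N$ --- the small loss $|k|^{2\theta\delta}$ in the first two being harmless once $\delta>0$ is taken small enough.

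The substance lies in the two drifts, for which (with $\theta>1/2$, as throughout) the required uniform estimates are already available: the bounds of Lemma~\ref{lemma:main-bounds} and Lemma~\ref{lemma:main-bounds-conv} apply to the Galerkin dynamics $u^N$ --- a stationary process with invariant law $\mu$ to which the It\^o trick of Section~\ref{sec:ito-trick} applies --- and, read at $M=N$, their right--hand sides carry no $N$. For $\tilde\cA^N=\tilde G^N$ the size bound is \eqref{eq:basic-est-2-conv} and the time modulus of continuity is exactly \eqref{eq:basic-est-4-conv}. For $\cA^N=G^N$ the size bound is \eqref{eq:basic-est-2}; the only delicate point is the time regularity, since the naive bound $(\cA^N_t-\cA^N_s)_k=\int_s^t(F_N(u^N_r))_k\,\dd r$ with $\|(F_N(u^N_r))_k\|_{L^p}\lesssim_p|k|N$ gives only $\lesssim|k|N|t-s|$, useless as $N\to\infty$. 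I would circumvent this by stationarity: $\cA^N_t-\cA^N_s$ has the same law as $\cA^N_{t-s}$, so \eqref{eq:basic-est-2} applied on $[0,t-s]$ yields $\sup_N\|(\cA^N_t)_k-(\cA^N_s)_k\|_{L^p(\PP_\mu)}\lesssim_p|k|^{3/2-\theta}|t-s|^{1/2}$, of the required form with $\kappa=1/2$ (equivalently, re--run the forward/backward martingale representation~\eqref{eq:key-representation} on $[s,t]$ and apply the It\^o trick~\eqref{eq:ito-trick} together with the energy estimate~\eqref{eq:first-energy-estimate}).

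The main obstacle is precisely this $N$--uniform control of the time oscillations of $\cA^N$ (and, more mildly, of $\tilde\cA^N$): because $F_N$ has $L^2(\mu)$--norm of order $|k|N$, no pathwise or Young--type argument survives $N\to\infty$, and it is only the It\^o trick of Section~\ref{sec:ito-trick}, which trades the \emph{a priori} divergent $\int_s^t\cE^\theta(F_N)\,\dd r$ for the convergent $\int_s^t\cE^\theta(H_N)\,\dd r\sim|k|^{3-2\theta}|t-s|$, that makes the estimate uniform. The remaining ingredients --- the compact--embedding bookkeeping for the weighted spaces $\cF L^{\infty,\alpha}$, the passage from moment bounds to tightness via Kolmogorov, and the reconstruction of the joint law from the marginals --- are routine.
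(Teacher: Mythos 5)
Your proposal is correct and follows essentially the same route as the paper: coordinate-wise moment bounds from Lemmas~\ref{lemma:main-bounds} and~\ref{lemma:main-bounds-conv} (in particular \eqref{eq:basic-est-2} for $\cA^N$ and \eqref{eq:basic-est-4-conv} for $\tilde\cA^N$), summed over $k$ and fed into a Kolmogorov-type tightness criterion, with $u^N$ handled through the mild equation and $W$ trivially. Your explicit use of stationarity to convert \eqref{eq:basic-est-2} into an increment bound $\lesssim_p|k|^{3/2-\theta}|t-s|^{1/2}$ is exactly what the paper's phrase ``an analogous argument'' leaves implicit, so you have if anything supplied more detail than the original.
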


\begin{proof}
The estimate~\eqref{eq:basic-est-4-conv} in the previous section readily gives that for any small $\eps >0$ and sufficienly large $p$
$$
\expect_\mu\left[\sum_{k\in\ZZ_0} |k|^{-(3/2-2\theta+3\theta\eps) p} \left(|(\tilde \cA_t^{N}-\tilde \cA_s^{N})_k| \right)^p\right]
\lesssim_{p,\eps} \sum_{k\in\ZZ_0} |k|^{-\theta\eps p} |t-s|^{p \eps} \lesssim |t-s|^{p \eps}
$$
This estimates show that the family of processes $\{ \tilde \cA^{N} \}_{N}$ is tight in $C([0,T],\cF L^{\infty,\alpha})$ for $\alpha=3/2-2\theta+3\theta\eps$ and sufficiently small $\eps>0$.
An analogous argument using the estimate~\eqref{eq:basic-est-2}
 shows that the family of processes $\{  \cA^{N} \}_{N}$ is tight in $C^\gamma([0,T],\cF L^{\infty,\beta})$ for any $\gamma<1/2$ and $\beta < 3/2-\theta$.
It is not difficult to show that the stochastic convolution $\int_0^t e^{-A^\theta(t-s)} A^{\theta/2} \dd W_s$ belongs to $C([0,T],\cF L^{\infty,1-\theta-\eps})$ for all small  $\eps>0$. 
Taking into account the mild equation~\eqref{eq:burgers-reg-mild}
we find that the processes $\{(u^{N}_t)_{t\in[0,T]}\}_{N}$ are tight in $C([0,T],\cF L^{\infty,\sigma-\eps})$. 
\end{proof}

We are now ready to prove our main theorem on existence of (probabilistically weak) controlled solutions to the generalized stochastic Burgers equation.

\begin{theorem}
There exists a probability space and a quadruple of processes $(u,\cA,\tilde\cA,W)$ with continuous trajectories in $\mathcal{X}$ such that $W$ is a cylindrical Brownian motion in $H$, $u$ is a controlled process and they satisfy
\begin{equation}
\label{eq:limit-1}
u_t =  u_0 +  \cA_t  -  \int_0^t A^\theta u_s \dd s +  B W_t = e^{-A^\theta t} u_0 + \tilde \cA_t  + \int_0^t e^{-A^\theta(t-s)} B \dd W_s 
\end{equation}
where, as space distributions,
\begin{equation}
\label{eq:limit-4}
\cA_t = \lim_{M \to \infty}\int_0^t F_{M}(u_s) \dd s
\quad \text{ and } \quad 
\tilde \cA_t  = \int_0^t e^{-A^\theta(t-s)} \dd \cA_s .
\end{equation}
this last integral being defined as a Young integral.
\end{theorem}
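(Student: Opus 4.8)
The plan is to realize the solution as a subsequential limit of the Galerkin approximations $u^N$ of the previous lemma. First I would combine the tightness just established with Prokhorov's theorem and the Skorokhod representation theorem to produce, on a single probability space, copies $(\bar u^N,\bar\cA^N,\bar{\tilde\cA}^N,\bar W^N)$ of $(u^N,\cA^N,\tilde\cA^N,W)$ converging almost surely in $C([0,T],\mathcal{X})$ to a limit $(u,\cA,\tilde\cA,W)$; since each $\bar W^N$ is a cylindrical Brownian motion on $H$, so is $W$. From now on I write $u^N$ for the Skorokhod copies. Note that $u^N\in\mathcal{R}_\theta$, so all the estimates of Section~\ref{sec:estimates} apply to the $u^N$.

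Next I would pass to the limit in the two identities satisfied by each $u^N$: the differential one, $u^N_t(\varphi)=u^N_0(\varphi)+\int_0^t u^N_s(-A^\theta\varphi)\dd s+\cA^N_t(\varphi)+(A^{\theta/2}\Pi_N W_t)(\varphi)$ for $\varphi\in\cS$, and the mild one~\eqref{eq:burgers-reg-mild}, with $\cA^N_t=\int_0^t F_N(u^N_s)\dd s$ and $\tilde\cA^N_t=\int_0^t e^{-A^\theta(t-s)}F_N(u^N_s)\dd s$. In each of these the linear terms converge by continuity, the two stochastic terms converge by continuity of the stochastic convolution together with the uniform moment bounds of Section~\ref{sec:estimates}, and $\cA^N,\tilde\cA^N$ converge by construction; this yields~\eqref{eq:limit-1}. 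For the relation $\tilde\cA_t=\int_0^t e^{-A^\theta(t-s)}\dd\cA_s$ I would argue mode by mode: integrating by parts, the Bochner integral defining $(\tilde\cA^N_t)_k$ equals $(\cA^N_t)_k-|k|^{2\theta}\int_0^t e^{-|k|^{2\theta}(t-s)}(\cA^N_s)_k\dd s$, which is stable under uniform convergence, and the limiting formula $\tilde\cA_t=\cA_t-A^\theta\int_0^t e^{-A^\theta(t-s)}\cA_s\dd s$ is precisely the value of the Young integral, well defined on each Fourier mode because $e^{-|k|^{2\theta}(t-\cdot)}$ is smooth in time while, by~\eqref{eq:basic-est-2} and stationarity of the increments, $s\mapsto(\cA_s)_k$ is almost surely H\"older continuous of every exponent below $1/2$.

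To identify $\cA_t=\lim_{M}\int_0^t F_M(u_s)\dd s$ I would use two facts. For each fixed $M$, $F_M$ is a continuous (polynomial) map on the spaces in play, hence $\int_0^t F_M(u^N_s)\dd s\to\int_0^t F_M(u_s)\dd s$ as $N\to\infty$. On the other hand, transferring~\eqref{eq:basic-est-3} from the Galerkin processes to $u$ by lower semicontinuity of $L^p$ norms under almost sure convergence (using $\int_0^t F_M(u_s)\dd s-\int_0^t F_N(u_s)\dd s=\lim_{N'}(G^{M}_t-G^{N}_t)$ evaluated along $u^{N'}$), one gets $\big\|\sup_{t}\big|\big(\int_0^t F_M(u_s)\dd s-\int_0^t F_N(u_s)\dd s\big)_k\big|\big\|_{L^p}\lesssim_p|k|T^{1/2}M^{1/2-\theta}$, uniformly in $N\ge M$. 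These permit one to interchange the limits $N\to\infty$ and $M\to\infty$ and to conclude that $\cA_t=\lim_M\int_0^t F_M(u_s)\dd s$ in $C([0,T],\cF L^{\infty,\beta})$ for a suitable $\beta<0$; in particular $\cA$ is a measurable functional of $u$.

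The remaining task is to check that $u\in\mathcal{R}_\theta$. Properties (i) and (iii) of Definition~\ref{def:controlled} descend from the approximations: each $u^N$ is stationary with marginal $\mu$ (a Markov process started from its invariant measure), and the reversed diffusion $u^N_{T-\cdot}$ solves the same equation with the sign of the drift reversed (the drift being $\mu$-antisymmetric, cf.\ $\langle x,F_N(x)\rangle=0$), so both statements survive the limit; for (ii) one reads $M_t(\varphi):=u_t(\varphi)-u_0(\varphi)-\int_0^t u_s(-A^\theta\varphi)\dd s-\cA_t(\varphi)$ off the equation, obtains it as the limit of the Galerkin martingales — hence a martingale with bracket $2t\|A^{\theta/2}\varphi\|_{\Lz}^2$ — and notes it is a martingale for the filtration of $u$ alone, since $\cA$ is a functional of $u$. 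I expect the one genuinely delicate point to be the requirement that $\cA$ have \emph{zero} quadratic variation, because the $\cA^N$ are of finite variation in time but finite variation is not stable under the limit, so a two-scale estimate is needed. Fixing $\varphi\in\cS$ and writing $\cA^M_t(\varphi)=\int_0^t\langle F_M(u_s),\varphi\rangle\dd s$, for each fixed $M$ the process $\cA^M(\varphi)$ is $C^1$ in $t$, hence of zero quadratic variation, while the tail $\cA(\varphi)-\cA^M(\varphi)$ is, by the transferred estimate~\eqref{eq:basic-est-3} and Kolmogorov's criterion, almost surely H\"older in time of any exponent $<1/2$ with a norm of order $M^{1/2-\theta}$ in $L^p$; letting $Q_\eps(f)=\eps^{-1}\int_0^{T-\eps}|f_{s+\eps}-f_s|^2\dd s$, one bounds $Q_\eps(\cA(\varphi))\lesssim Q_\eps(\cA^M(\varphi))+T\|\cA(\varphi)-\cA^M(\varphi)\|_{C^{1/2-}}^2$, where for each fixed $M$ the first term tends to $0$ as $\eps\to0$ and the second is small in probability once $M$ is large, precisely because $\theta>1/2$. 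Sending $\eps\to0$ and then $M\to\infty$ gives $Q_\eps(\cA(\varphi))\to0$ in probability, i.e.\ $\cA(\varphi)$ has zero quadratic variation for every test function $\varphi$, completing the proof that $u$ is a controlled solution. This two-scale argument, and the role of the hypothesis $\theta>1/2$ in it, is the crux; everything else is a routine passage to the limit.
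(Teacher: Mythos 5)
Your overall architecture is the paper's: Prokhorov plus Skorokhod to get an almost surely convergent realization, the three-term decomposition $\int_0^t F_M(u_s)\dd s=\int_0^t(F_M(u_s)-F_M(u^{N_n}_s))\dd s+\int_0^t(F_M(u^{N_n}_s)-F_{N_n}(u^{N_n}_s))\dd s+\cA^{N_n}_t$ combined with~\eqref{eq:basic-est-3} to identify $\cA$, Young integration in each Fourier mode for $\tilde\cA$, and the transfer of stationarity, reversibility and the martingale structure to the limit. All of that matches the paper's proof and is sound.

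There is, however, a genuine flaw in the one step you yourself single out as the crux: the zero quadratic variation of $\cA$. Your inequality $Q_\eps(\cA(\varphi))\lesssim Q_\eps(\cA^M(\varphi))+T\|\cA(\varphi)-\cA^M(\varphi)\|_{C^{1/2-}}^2$ is false. If $f$ is H\"older of exponent $\gamma$ then $Q_\eps(f)=\eps^{-1}\int_0^{T-\eps}|f_{s+\eps}-f_s|^2\dd s\le T\|f\|_{C^\gamma}^2\,\eps^{2\gamma-1}$, and for $\gamma<1/2$ the factor $\eps^{2\gamma-1}$ \emph{diverges} as $\eps\to0$; the uniform-in-$\eps$ bound you write holds only at $\gamma=1/2$ exactly, which is precisely the regularity you do not have for the tail. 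So with $M$ fixed first and $\eps\to0$ afterwards, your bound on the tail contribution gives nothing, and the two limits cannot be interchanged in the order you take them. The repair is exactly estimate~\eqref{eq:basic-est-4}, which the paper invokes at this point: optimizing~\eqref{eq:basic-est-1} against~\eqref{eq:basic-est-3} over $M$ (i.e.\ coupling $M$ to the time increment, $M\sim\eps^{-1/(1+2\theta)}$) shows that the increments of $(\cA_t)_k$ over an interval of length $\eps$ are of order $\eps^{2\theta/(1+2\theta)}$ in $L^p$, uniformly. Since $2\theta/(1+2\theta)>1/2$ exactly when $\theta>1/2$, the limit $\cA$ is almost surely H\"older of some exponent strictly larger than $1/2$, and zero quadratic variation follows immediately. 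Alternatively, your two-scale scheme can be salvaged by working in expectation rather than with pathwise H\"older norms: by stationarity, $\expect Q_\eps(\cA(\varphi)-\cA^M(\varphi))\lesssim T M^{1-2\theta}$ uniformly in $\eps$ directly from the $L^2$ bound~\eqref{eq:basic-est-3} on increments of length $\eps$, while $\expect Q_\eps(\cA^M(\varphi))\lesssim T\eps M^2$ from~\eqref{eq:basic-est-1}; then $\eps\to0$ followed by $M\to\infty$ does work. Either way, the missing ingredient is the quantitative balance between~\eqref{eq:basic-est-1} and~\eqref{eq:basic-est-3} that produces the exponent $2\theta/(1+2\theta)$, not a fixed-$M$ H\"older-$(1/2)^-$ bound on the tail.
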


\begin{proof}
Let us first prove~\eqref{eq:limit-4}. By tightness of the laws of $\{(u^N,\cA^N,\tilde\cA^N , W)\}_N$ in  $C(\mathbb{R};\mathcal{X})$ we can extract a subsequence which converges weakly (in the probabilistic sense) to a limit point in $C(\mathbb{R};\mathcal{X})$. By Skhorohod embedding theorem, up to a change of the probability space, we can assume that this subsequence which we call $\{N_n\}_{n\ge 1}$ converges almost surely to a limit $u = \lim_n u^{N_n} \in C(\mathbb{R};\mathcal{X})$. Then
$$
\int_0^t F_{M}(u_s) \dd s = \int_0^t (F_{M}(u_s) - F_{M}(u^{N_n}_s)) \dd s 
$$
$$\qquad
 + \int_0^t (F_{M}(u^{N_n}_s) - F_{N_n}(u^{N_n}_s)) \dd s
+ \int_0^t F_{N_n}(u^{N_n}_s) \dd s .
$$
But now, in  $C(\RR_+,\cF L^{\infty,3/2-\theta-\eps})$ we have the almost sure limit
$$
\lim_n \int_0^\cdot F_{N_n}(u^{N_n}_s) \dd s =\lim_n \cA^{N_n}_\cdot =  \cA_\cdot
$$
and, always almost surely in $C(\RR_+,\cF L^{\infty,3/2-\theta-\eps})$, we have also
$$
\lim_n \int_0^\cdot (F_{M}(u_s) - F_{M}(u^{N_n}_s)) \dd s = 0 ,
$$
since the functional $F_M$ depends only of a finite number of components of $u$ and $u^{N_n}$ and that we have the convergence of $u^{N_n}$ to $u$ in $C(\RR;\cF L^{\infty,\sigma-\eps})$ and thus distributionally uniformly in time.
Moreover, for all $k\in\ZZ_0$,
$$
\lim_M \sup_{N_n : M<N_n} \left\|\sup_{t\in[0,T]} \left| \int_0^t (F_{M}(u^{N_n}_s) - F_{N_n}(u^{N_n}_s))_k \dd s\right| \right\|_{L^p(\PP_\mu)}= 0.
$$
By the apriori estimates, $\cA^{N_n}$ converges to $\cA$ in $C^\gamma(\cF L^{\infty,3/2-\theta-\eps})$ for all $\gamma <1/2$ and $\eps > 0$ so that we can use Young integration to define $\int_0^t e^{-A^\theta(t-s)} \dd \cA^{N_n}_s$ as a space distribution and to obtain its distributional convergence (for example for each of its Fourier components) to $\int_0^t e^{-A^\theta(t-s)} \dd \cA^{N_n}_s$. At this point eq.~\eqref{eq:limit-1} is a simple consequence. The backward processes $\hat u^{N_n}_{t}=u^{N_n}_{T-t}$ and $\hat \cA^{N_n}_t = -\cA^{N_n}_{T-t}$ converge to $\hat u_{t}=u_{T-t}$ and $\hat \cA_t = -\cA_{T-t}$ respectively and moreover note that $\cA$ as a distributional process has trajectories which are H\"older continuous for any exponent smaller than $2\theta/(1+2\theta)>1/2$ as a consequence of the estimate~\eqref{eq:basic-est-4} and this directly implies that $\cA$ has zero quadratic variation. So $u$ is a controlled process in the sense of our definition.
\end{proof}

\section{Uniqueness for $\theta>5/4$}
\label{sec:uniq}

In this section we prove a simple pathwise uniqueness result for controlled solutions which is valid when $\theta > 5/4$. Note that to each controlled solution $u$  is naturally associated a cylindrical Brownian motion $W$ on $H$ given by the martingale part of the controlled decomposition~\eqref{eq:controlled-decomposition}. Pathwise uniqueness is then understood in the following sense.

\begin{definition}
SBE$_\theta$ has pathwise uniqueness if given two controlled processes $u,\tilde u\in\mathcal{R}_\theta$ on the same probability space which generate the same Brownian motion $W$ and such that $\tilde u_0 = u_0$ amost surely then there exists a negligible set $\mathcal{N}$ such that for all $\varphi\in\mathcal{S}$ and $t\ge 0$  $\{u_t(\varphi) \neq \tilde u_t(\varphi)\} \subseteq \mathcal{N}$.
\end{definition}

\begin{theorem}
\label{th:uniqueness}
The generalized stochastic Burgers equation has pathwise uniqueness when $\theta > 5/4$. 
\end{theorem}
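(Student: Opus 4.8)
The plan is to control the difference $v := u - \tilde u$ directly. Since $u$ and $\tilde u$ are controlled solutions generating the same cylindrical Brownian motion $W$, the martingale terms in their controlled decompositions~\eqref{eq:controlled-decomposition} agree, so $v$ carries no martingale and, writing $\mathcal{D}_t := \cA_t - \tilde\cA_t$ for the difference of the two Burgers drifts, one has $v_t(\varphi) = \int_0^t v_s(-A^\theta\varphi)\,\dd s + \mathcal{D}_t(\varphi)$ for $\varphi\in\cS$, with $v_0 = 0$. By Lemma~\ref{lemma:burgers-drift} and the estimates of Section~\ref{sec:estimates} (in particular~\eqref{eq:basic-est-2}, applied to both $u$ and $\tilde u$), $\mathcal{D}$ has continuous paths in $\cF L^{\infty,3/2-\theta-\eps}$ and is almost surely $\gamma$-H\"older in time for every $\gamma<1/2$. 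Solving the linear equation above by variation of constants — coordinatewise this is just a Riemann--Stieltjes integral against the scalar continuous path $(\mathcal{D}_\cdot)_k$ — gives the mild identity $v_t = e^{-A^\theta t}\mathcal{D}_t - \int_0^t A^\theta e^{-A^\theta(t-s)}(\mathcal{D}_s-\mathcal{D}_t)\,\dd s$.

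The first step is a soft regularity bootstrap turning $v$ into an honest function. Combining $\|A^\theta e^{-A^\theta\tau}\|_{\cF L^{\infty,\alpha}\to\cF L^{\infty,\beta}}\lesssim\tau^{-1-(\beta-\alpha)/(2\theta)}$ with $\|\mathcal{D}_s-\mathcal{D}_t\|_{\cF L^{\infty,\alpha}}\lesssim|t-s|^{\gamma}$ for $\alpha=3/2-\theta-\eps$, the mild identity shows $v\in C([0,T],\cF L^{\infty,\beta})$ for every $\beta<3/2-\theta-\eps+2\theta\gamma$; letting $\gamma\uparrow1/2$ the bound reaches $3/2-\eps$, which is strictly bigger than $1$. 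Thus, almost surely and uniformly on $[0,T]$, $v_t$ is a function of spatial regularity $>1$; in particular the products of $v_t$ with the white noises $u_t,\tilde u_t$ below are classically defined.

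The second step is the a priori estimate that forces $v\equiv0$. With $F(x)=Bx^2$ and $F_M(x)=F(\Pi_M x)$, the identity $a^2-b^2=(a+b)(a-b)$ gives $F_M(u_s)-F_M(\tilde u_s)=\Pi_M B[\Pi_M(u_s+\tilde u_s)\,\Pi_M v_s]$; since $v_s\in\cF L^{\infty,\beta}$ with $\beta>1$ one may pass to the limit $M\to\infty$ coefficientwise (dominated convergence, the domination being provided by the bound below), so by~\eqref{eq:limit-4} $v_t=\int_0^t e^{-A^\theta(t-s)}B[(u_s+\tilde u_s)v_s]\,\dd s$ with $(u_s+\tilde u_s)v_s$ an absolutely convergent Fourier product. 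Fixing $\eta\in(0,\min(\theta-5/4,1/4))$ and $\delta\in(0,2\eta)$, a Cauchy--Schwarz split placing the weight $|k_1|^{-1-\delta}$ on the white-noise factor and its reciprocal on the $v$ factor yields, for $s\le T$,
$$
\big|\big(B[(u_s+\tilde u_s)v_s]\big)_k\big|\lesssim|k|^{3/2+\delta/2}\,R_s\,\|v_s\|_{\cF L^{\infty,1+\eta}},\qquad R_s:=\Big(\sum_{k_1\in\ZZ_0}|k_1|^{-1-\delta}\big|(u_s+\tilde u_s)_{k_1}\big|^2\Big)^{1/2},
$$
where $\sup_{s\le T}R_s<\infty$ a.s.\ (as $R_s\lesssim\|u_s\|_{\cF L^{\infty,-\eps}}+\|\tilde u_s\|_{\cF L^{\infty,-\eps}}$ and $u,\tilde u$ have continuous $\cF L^{\infty,-\eps}$ paths). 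Inserting this into the mild identity and using $\int_0^t e^{-|k|^{2\theta}(t-s)}\dd s=(1-e^{-|k|^{2\theta}t})/|k|^{2\theta}$ gives
$$
\|v_t\|_{\cF L^{\infty,1+\eta}}\lesssim\Big(\sup_{|k|\ge1}|k|^{5/2+\eta+\delta/2-2\theta}\big(1-e^{-|k|^{2\theta}t}\big)\Big)\sup_{s\le t}R_s\,\sup_{s\le t}\|v_s\|_{\cF L^{\infty,1+\eta}}\lesssim t^{\kappa}\sup_{s\le t}R_s\,\sup_{s\le t}\|v_s\|_{\cF L^{\infty,1+\eta}},
$$
with $\kappa=1-(5/2+\eta+\delta/2)/(2\theta)>0$ precisely because $\theta>5/4$. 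The exponent $5/2$ here records the two competing demands: the image must return to a space of functions ($1+\eta$ derivatives) while the Burgers nonlinearity costs $3/2$ net derivatives against the $2\theta$ supplied by Duhamel smoothing, so the threshold is $2\theta=5/2$.

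The last step is localization and iteration. For a.e.\ $\omega$ the quantity $R:=\sup_{s\le T}R_s$ is finite; choosing $\tau>0$ with $C R\,\tau^{\kappa}\le1/2$ (where $C$ is the shift-invariant constant above) the estimate forces $\sup_{t\le\tau}\|v_t\|_{\cF L^{\infty,1+\eta}}\le\tfrac12\sup_{t\le\tau}\|v_t\|_{\cF L^{\infty,1+\eta}}$ — finite by the first step — hence $v\equiv0$ on $[0,\tau]$ and $v_\tau=0$; applying the same argument on $[\tau,2\tau],[2\tau,3\tau],\dots$ (legitimate since $u_\tau=\tilde u_\tau$, the bound on $R$ is unchanged and $\tau$ may be kept fixed) propagates $v\equiv0$ over all of $[0,T]$, which is pathwise uniqueness. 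The genuine obstacle is the conceptual first step: the drift difference is truly bilinear in $v$ and, unlike the Burgers drift of a single stationary solution, is out of reach of the It\^o trick of Section~\ref{sec:ito-trick} because $v$ is not stationary; one is forced to rely on pathwise parabolic smoothing, which only overcomes the derivative loss of the nonlinearity when $\theta>5/4$.
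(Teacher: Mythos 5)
Your argument takes a genuinely different route from the paper's. The paper proves uniqueness indirectly: it shows that the Galerkin approximations $u^N$ (driven by the $W$ extracted from the martingale part of $u$) converge almost surely to $u$, by writing a mild equation for $\Delta^N=\Pi_N(u-u^N)$, bounding the tail error $\varphi^N$ via the a priori estimates of Section~\ref{sec:estimates}, and closing a smallness-in-time contraction; since the $u^N$ are pathwise unique, so is the limit. You instead run the same type of contraction directly on $v=u-\tilde u$, which is cleaner in that it bypasses the Galerkin scheme entirely, at the price of having to justify the representation $\cA_t-\tilde\cA_t=\int_0^t B[(u_s+\tilde u_s)v_s]\,\dd s$ for the limiting drifts --- which is what your bootstrap Step 1 is for. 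Both proofs ultimately rest on the same two facts: the stationary estimates give the drift temporal H\"older regularity $1/2^-$ and spatial regularity $\theta-3/2$, and Duhamel smoothing then places the difference of two solutions in a space of functions with more than one derivative, where the bilinear term can be estimated pathwise against the white-noise factor. Your treatment of $R_s$ (weighted Cauchy--Schwarz, H\"older in time, stationarity) is essentially the paper's treatment of $Q_T$.

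Two points need repair. First, a sign error in Step 1: estimate~\eqref{eq:basic-est-2} gives $|(\cA_t)_k|\lesssim|k|^{3/2-\theta}$, so in the convention $|x|_{\cF L^{\infty,\alpha}}=\sup_k|k|^\alpha|x_k|$ the drift difference lives in $\cF L^{\infty,\theta-3/2-\eps}$, not $\cF L^{\infty,3/2-\theta-\eps}$ (the paper's tightness lemma carries the same typo, but its own proof uses the weight $|k|^{2\theta-3/2-\cdots}$, i.e.\ the correct sign). Feeding $\alpha=\theta-3/2-\eps$ into your bootstrap $\beta<\alpha+2\theta\gamma$ with $\gamma\uparrow 1/2$ yields $\beta<2\theta-3/2-\eps$, not $3/2-\eps$: the conclusion that $v$ has more than one derivative is \emph{not} automatic for all $\theta$, but holds precisely because $\theta>5/4$; the hypothesis therefore enters Step 1 as well as Step 2, and your stated reason for Step 1 would misleadingly suggest otherwise. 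Second, the assertion $\sup_{s\le T}R_s<\infty$ a.s.\ relies on a general controlled process having continuous paths in $\cF L^{\infty,-\eps}$, whereas Definition~\ref{def:controlled} only gives continuity in $\cS'$; this must be extracted from the mild decomposition (free evolution of $u_0$, stochastic convolution, and $\tilde\cA$ via~\eqref{eq:basic-est-4-conv}), or avoided by keeping $R_s$ inside the time integral and applying H\"older plus stationarity and Borel--Cantelli to produce the small random time $\tau$, exactly as the paper does for $Q_T$. With these repairs the argument is sound and gives the same threshold $\theta>5/4$.
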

\begin{proof}
Let $u$ be a controlled solution to the equation and let $u^N$ be the Galerkin
approximations defined above with respect to the cylindrical Brownian motion $W$ obtained from the martingale part of the decomposition of $u$ as a controlled process. We will prove that $u^N \to u$ almost surely in $C(\RR_+;\cF L^{2\theta-3/2-2\eps,\infty})$ for any small $\eps >0$. Since Galerkin
approximations have unique strong solutions we have $\tilde u^N = u^N$ almost surely and in the limit $\tilde u = u$  in $C(\RR_+;\cF L^{2\theta-3/2-2\eps,\infty})$ almost surely. This will imply the claim by taking as negligible set in the definition of pathwise uniqueness the set $\mathcal{N}=\{\sup_{t\ge 0}\|u_t-\tilde u_t\|_{\cF L^{2\theta-3/2-2\eps,\infty}}>0\}$. 
Let us  proceed to prove that $u^N \to u$.
By bilinearity,
$$
 F_N \left( u \right) - F_N \left( u^N \right)  = F_N ( \Pi_N u_s+u^N_s,\Delta^N_s)
$$
and the difference $\Delta^N = \Pi_N ( u - u^N )$
 satisfies the equation
$$
\Delta^N_t = \Pi_N \int_0^t e^{- A^{\theta} ( t - s )} 
   F_N ( u_s+u^N_s,\Delta^N_s) \dd s +\varphi^N_t
$$
where 
$$
\varphi^N_t = \int_0^t e^{- A^{\theta} \left( t - s \right)} \left( F_{} \left( u \right)
   - F_N \left( u^{} \right) \right) \dd s .
$$
Note that
\[ \| \sup_{t \in [ 0, T ]} | ( \varphi^N_t )_k | \|_{L^p (
   \mathbb{P}_{\mu} )} \lesssim_p \max(| k |^{1 - 2 \theta} N^{1
   / 2 - \theta},| k |^{3/2 - 2 \theta}) \]
which by interpolation gives
\[ \| \sup_{t \in [ 0, T ]} | ( \varphi^N_t )_k | \|_{L^p (
   \mathbb{P}_{\mu} )}\lesssim_p | k |^{3/2 - 2 \theta+\varepsilon} N^{-\varepsilon} \]
for any small $\eps >0$. Now let
$$
\Phi_N = \sup_{k\in\ZZ_0} \sup_{t \in [ 0, T ]} |k|^{2\theta-3/2-2\eps} | ( \varphi^N_t )_k | 
$$   
then
$$
\expect \sum_{N>1} N \Phi_N^p \le\sum_{N>1} N \sum_{k\in\ZZ_0}  \sup_{t \in [ 0, T ]} |k|^{p(2\theta-3/2-2\eps)}\expect  | ( \varphi^N_t )_k |^p 
$$
$$
\lesssim_p \sum_{N>1} N^{1-\eps p} \sum_{k\in\ZZ_0} |k|^{-p\eps}<+\infty
$$   
for $p$ large enough, which implies that almost surely
$
\Phi_N \lesssim_{p,\omega} N^{-1/p} 
$.
For the other term we have
$$
\sup_{t\in[0,T]}\left|\left( \int_0^t e^{- A^{\theta} \left( t - s \right)} F_N \left(
   \Pi_N u + u^N, \Delta_N \right) \dd s \right)_k\right| 
  \lesssim A_N |k|^{3/2-2\theta+2\eps} Q_T
$$   
where
   $
    A_N = \sup_{t \in \left[ 0, T \right]} \sup_k \left| k \right|^{2 \theta -
   3/2 - 2\varepsilon} \left| \left( \Delta^N_t \right)_k \right| $
and
$$
Q_T = \sup_{t\in[0,T]} |k|^{2\theta-1/2-2\eps} \int_0^t e^{- |k|^{2\theta} \left( t - s \right)} \sum_{q\in \ZZ_0} |(
   \Pi_N u_s + u^N_s)_q| |k-q|^{3/2-2\theta+2\eps}  \dd s 
 $$
This gives
\[ 
A_N \leqslant   Q_T A_N + \Phi_N.
 \] 
Since $3/2-2 \theta <-1 $ (that is $\theta > 5/4$), we have the estimate:
 $$
Q_T \lesssim  \sup_{t\in[0,T]} |k|^{2\theta-1/2-2\eps} \left[\int_0^t e^{- p' |k|^{2\theta} \left( t - s \right)}  \dd s  \right]^{1/p'} \left[ \int_0^T  \sum_{q\in \ZZ_0}\frac{ |(
   \Pi_N u_s + u^N_s)_q|^{p}}{ |k-q|^{-3/2+2\theta-2\eps}}  \dd s \right]^{1/p}
 $$
valid for some $p> 1$ (with $1/p'+1/p=1$). Then
$$ Q_T \lesssim   |k|^{2\theta-1/2-2\eps-2\theta/p'}  \left[ \int_0^T  \sum_{q\in \ZZ_0}\frac{ |(
   \Pi_N u_s + u^N_s)_q|^{p}}{ |k-q|^{-3/2+2\theta-2\eps}}  \dd s \right]^{1/p}
$$
and taking $p$ large enough such that $2\theta-1/2-2\eps-2\theta/p'\le 0$ we obtain
$$
Q_T \lesssim_p \left[ \int_0^T  \sum_{q\in \ZZ_0}\frac{ |(
   \Pi_N u_s + u^N_s)_q|^{p}}{ |k-q|^{-3/2+2\theta-2\eps}}  \dd s \right]^{1/p}
$$
By the stationarity of the processes $u$ and $u^N$ and the fact that their marginal laws are the white  noise we have
$$
\expect[ Q_T^p] \lesssim_p \int_0^T  \sum_{q\in \ZZ_0}\frac{\expect |(
   \Pi_N u_s + u^N_s)_q|^{p}}{ |k-q|^{-3/2+2\theta-2\eps}}  \dd s = T \sum_{q\in \ZZ_0}\frac{1}{ |k-q|^{-3/2+2\theta-2\eps}} \lesssim_p T
$$
Then by a simple Borel-Cantelli argument, almost surely $Q_{1/n} \lesssim_{p,\omega} n^{-1+1/p}$. Putting together the estimates for $\Phi_N$ and that for $Q_{1/n}$ we see that there exists a (random)  $T$ such that $C Q_T\le 1/2$ almost surely and that for this $T$:
$
A_N \leqslant  2 \Phi_N
$, 
which given the estimate on $\Phi_N$ implies that $A_N \to 0$ as $N\to\infty$ almost surely and that the solution of the equation is unique and is the
(almost-sure) limit of the Galerkin approximations.
\end{proof}

\section{Alternative equations}
\label{sec:alternative}
The technique of the present paper extends straighforwardly to some other modifications of the stochastic Burgers equation.

\subsection{Regularization of the convective term}
 Consider for example the equation
\begin{equation}
\label{eq:burgers-daprato}
\dd u_t = - A u_t \dd t + A^{-\sigma}F(A^{-\sigma} u_t) \dd t + B \dd W_t 
\end{equation}
which is the equation considered by Da~Prato, Debbussche and Tubaro in~\cite{DDT}. Letting $F_\sigma(x) = A^{-\sigma}F(A^{-\sigma} x)$, denoting by $H_\sigma$ the corresponding solution of the Poisson equation and following the same strategy as above we obtain the same bounds
$$
 \cE((H_{\sigma,N})^\pm_k)(x)\lesssim     \sum_{\substack{k_1,k_2 :k_1+k_2=k\\|k|,|k_1|,|k_2|\le N}}c_\sigma(k,k_1,k_2) |x_{k_2}|^2  
$$
where $c_\sigma(k,k_1,k_2) = |k|^{2-4\sigma}/[|k_1|^{4\sigma}|k_1|^{4\sigma}(|k_1|^{2}+|k_2|^{2})]$.
This quantity can then be bounded in terms of the sum
$$
I_{\sigma,N}(k) = \sum_{\substack{k_1,k_2 :k_1+k_2=k\\|k|,|k_1|,|k_2|\le N}}c_\sigma(k,k_1,k_2) \lesssim |k|^{1-12\sigma}
$$
From which we can reobtain similar bounds to those exploited above.
For example
$$
\left\|\int_0^t (e^{-A(t-s)}F_{\sigma,M}(u_s))_k \dd  s \right\|_{L^p(\PP_\mu)}\lesssim_p |k|^{-1/2-6\sigma} 
$$
And in particular we have existence of weak controlled solutions when $8\sigma+2>1$, that is $\sigma>-1/8$ and pathwise uniqueness when $-1/2-6\sigma<-1$ that is $\sigma> 1/12$. Which is an improvement over the result in~\cite{DDT} which has uniqueness for $\sigma>1/8$. 

\subsection{The Sasamoto--Spohn discrete model}
Another application of the above techniques is to the analysis of the discrete approximation to the stochastic Burgers equation proposed by Spohn and Sasamoto in~\cite{Spohn}. Their model is the following:
\begin{equation}
\label{eq:sasamoto-spohn}
\begin{split}
\dd u_j & = (2N+1) (u_j^2+u_j u_{j+1}-u_{j-1}u_j-u^2_{j-1})\dd t
\\
 & \qquad  + (2N+1)^2(u_{j+1}-2 u_j+u_{j-1})\dd t + (2N+1)^{3/2} (\dd B_j - \dd B_{j-1})
\end{split}
\end{equation}
for $j=1,\dots,2N+1$ with periodic boundary conditions $u_0=u_{2N+1}$ and where the processes $(B_j)_{j=1,\dots,2N+1}$ are a family of independents standard Brownian motions with $B_0=B_{2N+1}$. This model has to be tought as the discretization of the dynamic of the periodic velocity field $u(x)$ with $x\in(-\pi,\pi]$ sampled on a grid of mesh size $1/(2N+1)$,  that is $u_j = u(\xi^N_j)$ with $\xi^N_j = -\pi+2\pi(j/(2N+1))$. This fixes also the scaling factors for the different contributions to the dynamics if we want that, at least formally, this equation goes to a limit described by a SBE. Passing to Fourier variables $\hat u(k) = (2N+1)^{-1}\sum_{j=0}^{2N-1} e^{i \xi^N_j k} u_j$ for $k\in \ZZ^N$ with $\ZZ^N = \ZZ\cap [-N,N]$ and imposing that $\hat u(0)=0$, that is, considering the evolution only with zero mean velocity we get the system of ODEs:
$$
\dd \hat u_t(k) = F^\flat_N(\hat u_t)_k \dd t 
 -  |g_N(k)|^2  \hat u_t(k) \dd t + (2N+1)^{1/2} g_N(k) \dd \hat B_t(k)
$$
for $k\in \ZZ_0^N=\ZZ_0\cap [-N,N]$, where $g_N(k)=(2N+1)(1-e^{i k/(2N+1)})$,
$$
F^\flat_N(u_t)_k = \sum_{k_1,k_2\in\ZZ^N_0} \hat u_t(k_1)\hat u_t(k_2)[ g_N(k)-g_N(k)^*+g_N(k_1)-g_N(k_2)^*]
$$
and $(\hat B_\cdot(k))_{k\in\ZZ_0^N}$ is a family of centred complex Brownian motions such that $\hat B(k)^* = \hat B(-k)$ and with covariance $\expect \hat B_t(k) \hat B_t(-l) = \II_{k=l} t (2N+1)^{-1}$. 
If we then let $\beta(k) = (2N+1)^{1/2} \hat B(k)$ we obtain a family of complex BM of covariance 
$\expect  \beta_t(k)  \beta_t(-l) = t \II_{k=l} $. The generator $L^\flat_N$ of this stochastic dynamics is given by
$$
L^{\flat}_N \varphi( x) = \sum_{k\in\ZZ^N_0} F^\flat_N(x)_k  D_k \varphi( x)+L^{g_N,OU}_N \varphi (x)
$$
with $$ 
L^{g_N}_N \varphi( x) = \sum_{k\in\ZZ^N_0} |g_N(xk)|^2 (x_k D_{k}+ D_{-k}  D_k) \varphi( x)
$$
the generator of the OU process corresponding to the linear part associated with the multiplier $g_N$. It is easy to check that the complete dynamics preserves the (discrete) white noise measure, indeed
$$
\sum_{k\in\ZZ_0^N} x_{-k} F^\flat_N(x)_k = \sum_{\substack{k,k_1,k_2\in\ZZ_0^N\\k+k_1+k_2=0}}  x_{k}  x_{k_1} x_{k_2}[ g_N(k)^*-g_N(k)+g_N(k_1)-g_N(k_2)^*] =0
$$
since the symmetrization of the r.h.s. with respect to the permutations of the variables $k,k_1,k_2$ yields zero. Then defining suitable controlled process with respect to the linear part of this equation we can prove our apriori estimates on additive functionals which are now controlled by the quantity
$$
 \cE^{g_N}((H_{g_N,N})^\pm_k)(x)\lesssim     \sum_{\substack{k_1,k_2 :k_1+k_2=k\\|k|,|k_1|,|k_2|\le N}}c_{g_N}(k,k_1,k_2) |x_{k_2}|^2  
$$
with $c_{g_N}(k,k_1,k_2) = |g_N(k)|^2/[(|g_N(k_1)|^{2}+|g_N(k_2)|^{2})]$. Moreover noting that
$$
|g_N(k)|^2 = 2 (2N+1)^2(1-\cos(2\pi k/(2N+1)) \sim |k|^2
$$
uniformly  $N$, it is possible to estimate this energy in the same way we did before in the case $\theta=1$ and obtain  that the family of stationary solutions of equation~\eqref{eq:sasamoto-spohn} is tight in $C([0,T],\cF L^{\infty,-\eps})$ for all $\eps >0$. Moreover using the fact that $g_N(k) \to ik$ as $N\to \infty$ uniformly for bounded $k$ and that 
$$
\pi_M F^\flat_N(\pi_M x)_k = \sum_{k_1,k_2\in\ZZ^N_0} \II_{|k|,|k_1|,|k_2|\le M}  x_{k_1} x_{k_2}[ g_N(k)-g_N(k)^*+g_N(k_1)-g_N(k_2)^*]
$$
$$
\to 3 i k \sum_{k_1,k_2\in\ZZ_0} \II_{|k|,|k_1|,|k_2|\le M}  x_{k_1} x_{k_2} = 3 F_M(x)_k
$$
it is easy to check that any accumulation point is a controlled solution of the stochastic Burgers equations~\eqref{eq:burgers-theta}. 

\section{2d stochastic Navier-Stokes equation}
\label{sec:ns}

We consider the problem of stationary solutions to the 2d stochastic Navier-Stokes equation considered in~\cite{albeverio-cruzeiro} (see also~\cite{albeverio-ferrario}). We would like to deal with invariant measures obtained by formally taking the kinetic energy of the fluid and considering the associated Gibbs measure. However this measure is quite singular and we need a bit of hyperviscosity in the equation to make our estimates work.
 
\subsection{The setting}
Fix $\sigma>0$ and consider the following stochastic differential equation
\begin{equation}
\label{eq:ns}
\dd (u_{t})_k = - |k|^{2+2\sigma} (u_{t})_k \dd t +  B_k(u_t) \dd t + |k|^{\sigma} \dd \beta^k_t
\end{equation}
where $(\beta^k)_{k\in \Q}$ is a family of complex BMs for which $(\beta^k)^* = \beta^{-k}$ and $\expect[\beta^k \beta^q] = \II_{q+k=0}$, $u$ is a stochastic process with continuous trajectories in the space of distributions on the two dimensional torus $\TT^2$,
$$
 B_k(x) = \sum_{k_1+k_2=k} b(k,k_1,k_2) x_{k_1} x_{k_2}
$$
where $x: \Q \to \CC$ is such that $x_{-k} = x_k^*$ and
$$
b(k,k_1,k_2) = \frac{(k^\bot \cdot k_1)(k \cdot k_2)}{k^2}
$$
with $(\xi,\eta)^\bot = (\eta,-\xi) \in \RR^2$. Apart from the two-dimensional setting and the difference covariance structure of the linear part this problem has the same structure as the one dimensional stochastic Burgers equation we considered before. 
Note that to make sense of it (and in order to construct controlled solutions)
we can consider the  Galerkin approximations constructed as follows. Fix $N$ and solve the  problem finite dimensional problem
\begin{equation}
\label{eq:ns-N}
\dd (u^N_{t})_k = - |k|^{2+\sigma} (u^N_{t})_k \dd t +  B^N_k(u^N_t) \dd t + |k|^{-\sigma} \dd \beta^k_t
\end{equation}
for $k \in \ZZ^2_N = \{k \in \ZZ^2 : |k| \le N\}$, where
\begin{equation}
\label{eq:drift-N}
 B^N_k(x) =\II_{|k|\le N} \sum_{\substack{k_1+k_2=k\\|k_1|\le N, |k_2|\le N}} b(k,k_1,k_2) x_{k_1} x_{k_2} 
 \end{equation}
The generator of the process $u^N$ is given by
$
L^N\varphi(x) = L_0\varphi(x) + \sum_{k\in \Q} B^N_k(x) D_k \varphi(x)
$
where
$$
L^0 \varphi(x) = \frac12  \sum_{k\in \Q} |k|^{2\sigma}( D_{-k}D_k\varphi(x) -|k|^2 x_k D_k \varphi(x))  
$$
is the generator of a suitable OU flow. Note moreover that the kinetic energy of $u$ given by
$
E(x) = \sum_k |k|^{2} |x_k|^2 
$
is invariant under the flow generated by $B^N$. Moreover
$
D_{k} B^N_k(x) = 0 
$
since $x_k$ does not enter in the expression of $B^N_k(x)$, so the vectorfields $B^N$ leave also the measure
$
\prod_{k \in \QN}  dx_k 
$
invariant. Then  the  (complex) Gaussian measures
$$
\gamma(dx) = \prod_{k \in \Q} Z_k e^{-|k|^2 |x_k|^2} dx_k
$$
is invariant under  the flow generated by $B^N$. (This measure should be understood restricted to the set $\{x \in \CC^{\Q} : x_{-k} = \overline{x_k} \}$).
The measure $\gamma$ is also invariant for the $u^N$ diffusion since it is invariant for $B^N$ and for the OU process generated by $L^0$.
Intoduce standard Sobolev norms
$
\|x\|_\sigma^2 = \sum_{k \in \Q} |k|^{2\sigma} |x_k|^2
$
and denote with $H^\sigma$ the space of elements $x$ with $\|x \|_{\sigma}<\infty$.
The measure $\gamma$ is the Gaussian measure associated to $H^1$ and is supported on any $H^\sigma$ with $\sigma<0$
$$
\int \|x\|_\sigma^2 \gamma(dx) = \sum_{k \in \Q}  |k|^{2\sigma-2} < \infty
$$
so $(\gamma, H^1, \cap_{\eps < 0}H^\eps)$ is an abstract Wiener space in the sense of Gross. Note that the vectorfield  $B_k(x)$ in not defined on the support of $\gamma$. To give sense of controlled solutions to this equation we need to control
$$
 \cE((H_{N})^\pm_k)(x)\lesssim     \sum_{\substack{k_1,k_2 :k_1+k_2=k\\|k|,|k_1|,|k_2|\le N}}c_{\text{ns}}(k,k_1,k_2) |x_{k_2}|^2  
$$
with $c_{\text{ns}}(k,k_1,k_2) = |k_1|^{2\sigma} |k_1|^2|k_2|^2/(|k_1|^{2+2\sigma}+|k_2|^{2+2\sigma})^2$ and note that the stationary expectation of this term can be estimated by
$$
I_N(k) =  \sum_{\substack{k_1,k_2 :k_1+k_2=k\\|k|,|k_1|,|k_2|\le N}}c_{\text{ns}}(k,k_1,k_2) |k_2|^{-2} \lesssim \sum_{\substack{k_1,k_2 :k_1+k_2=k\\|k|,|k_1|,|k_2|\le N}}\frac{|k_1|^{2+2\sigma}}{ (|k_1|^{2+2\sigma}+|k_2|^{2+2\sigma})^2}\lesssim 
$$
$$
 \lesssim \sum_{\substack{k_1,k_2 :k_1+k_2=k\\|k|,|k_1|,|k_2|\le N}}\frac{1}{ |k_1|^{2+2\sigma}+|k_2|^{2+2\sigma}}\lesssim |k|^{-2\sigma}
$$
for any $\sigma > 0$. This estimate allows to apply our machinery and obtain stationary controlled solutions to this equation.


\begin{bibdiv} 
\begin{biblist}

\bib{albeverio-cruzeiro}{article}{
	title = {Global flows with invariant {(Gibbs)} measures for Euler and {Navier-Stokes} two dimensional fluids},
	volume = {129},
	issn = {0010-3616},
	url = {http://www.springerlink.com/content/u1406006h7x32575/abstract/},
	doi = {10.1007/BF02097100},
	number = {3},
	journal = {Communications in Mathematical Physics},
	author = {Albeverio, S.},
	author = {Cruzeiro, {A.-B.}},
	year = {1990},
	keywords = {Physics and Astronomy},
	pages = {431--444},
}

\bib{albeverio-ferrario}{incollection}{
	series = {Lecture Notes in Mathematics},
	title = {Some Methods of Infinite Dimensional Analysis in Hydrodynamics: An Introduction},
	volume = {1942},
	isbn = {978-3-540-78492-0},
	shorttitle = {Some Methods of Infinite Dimensional Analysis in Hydrodynamics},
	url = {http://www.springerlink.com/content/2n14280t40q7v34q/abstract/},
	booktitle = {{SPDE} in Hydrodynamic: Recent Progress and Prospects},
	publisher = {Springer Berlin / Heidelberg},
	author = {Albeverio, S.},
	author = {Ferrario, B.},
	year = {2008},
	keywords = {Mathematics and Statistics},
	pages = {1--50},
}
\bib{assing1}{article} {
	title = {A Pregenerator for Burgers  Equation Forced by Conservative Noise},
	volume = {225},
	issn = {0010-3616},
	url = {http://www.springerlink.com/content/t48f9yxafdddjnwx/abstract/},
	doi = {10.1007/s002200100606},
	number = {3},
	journal = {Communications in Mathematical Physics},
	author = {Assing, S.},
	year = {2002},
	keywords = {Physics and Astronomy},
	pages = {611--632},
}

\bib{assing2}{article} {
	title = {A rigorous equation for the {Cole-Hopf} solution of the conservative {KPZ} dynamics},
	url = {http://arxiv.org/abs/1109.2886},
	journal = {{arXiv:1109.2886}},
	author = {Assing, S.},
	month = {sep},
	year = {2011},
}

\bib{babin-kdv}{article}{
	title = {On the regularization mechanism for the periodic Korteweg-de Vries equation},
	volume = {64},
	issn = {0010-3640},
	url = {http://dx.doi.org/10.1002/cpa.20356},
	doi = {10.1002/cpa.20356},
	number = {5},
	journal = {Communications on Pure and Applied Mathematics},
	author = {Babin, A. V.},
	author = {Ilyin, A. A.},
	author = {Titi, E. S.},
	year = {2011},
	pages = {591--648}
}

\bib{babin-ns}{article}{
	title = {Regularity and integrability of {\$3\$D} Euler and {Navier-Stokes} equations for rotating fluids},
	volume = {15},
	issn = {0921-7134},
	number = {2},
	journal = {Asymptotic Analysis},
	author = {Babin, A.},
	author = {Mahalov, A.},
	author = {Nicolaenko, B.},
	year = {1997},
	pages = {103--150}
}

		\bib{BG}{article}{
   author={Bertini, L.},
   author={Giacomin, G.},
   title={Stochastic Burgers and KPZ equations from particle systems},
   journal={Comm. Math. Phys.},
   volume={183},
   date={1997},
   number={3},
   pages={571--607},
   issn={0010-3616},
   review={\MR{1462228 (99e:60212)}},
   doi={10.1007/s002200050044},
}
 
\bib{CLO}{article}{
   author={Chang, C.-C.},
   author={Landim, C.},
   author={Olla, S.},
   title={Equilibrium fluctuations of asymmetric simple exclusion processes
   in dimension $d\geq 3$},
   journal={Probab. Theory Related Fields},
   volume={119},
   date={2001},
   number={3},
   pages={381--409},
   issn={0178-8051},
   review={\MR{1821140 (2002e:60157)}},
   doi={10.1007/PL00008764},
}
		
\bib{DDT}{article}{
   author={Da Prato, G.},
   author={Debussche, A.},
   author={Tubaro, L.},
   title={A modified Kardar-Parisi-Zhang model},
   journal={Electron. Comm. Probab.},
   volume={12},
   date={2007},
   pages={442--453 (electronic)},
   issn={1083-589X},
   review={\MR{2365646 (2008k:60147)}},
}

\bib{DF}{article}{
   author={Da Prato, G.},
   author={Flandoli, F.},
   title={Pathwise uniqueness for a class of SDE in Hilbert spaces and
   applications},
   journal={J. Funct. Anal.},
   volume={259},
   date={2010},
   number={1},
   pages={243--267},
   issn={0022-1236},
   review={\MR{2610386}},
   doi={10.1016/j.jfa.2009.11.019},
}
	
\bib{DZ}{book}{
   author={Da Prato, G.},
   author={Zabczyk, J.},
   title={Stochastic equations in infinite dimensions},
   series={Encyclopedia of Mathematics and its Applications},
   volume={44},
   publisher={Cambridge University Press},
   place={Cambridge},
   date={1992},
   pages={xviii+454},
   isbn={0-521-38529-6},
   review={\MR{1207136 (95g:60073)}},
   doi={10.1017/CBO9780511666223},
}
		
\bib{FGP}{article}{
   author={Flandoli, F.},
   author={Gubinelli, M.},
   author={Priola, E.},
   title={Well-posedness of the transport equation by stochastic
   perturbation},
   journal={Invent. Math.},
   volume={180},
   date={2010},
   number={1},
   pages={1--53},
   issn={0020-9910},
   review={\MR{2593276}},
   doi={10.1007/s00222-009-0224-4},
}
	
\bib{MR1988703}{article}{
   author={Flandoli, F.},
   author={Russo, F.},
   author={Wolf, J.},
   title={Some SDEs with distributional drift. I. General calculus},
   journal={Osaka J. Math.},
   volume={40},
   date={2003},
   number={2},
   pages={493--542},
   issn={0030-6126},
   review={\MR{1988703 (2004e:60110)}},
}

\bib{MR2065168}{article}{
   author={Flandoli, F.},
   author={Russo, F.},
   author={Wolf, J.},
   title={Some SDEs with distributional drift. II. Lyons-Zheng structure,
   It\^o's formula and semimartingale characterization},
   journal={Random Oper. Stochastic Equations},
   volume={12},
   date={2004},
   number={2},
   pages={145--184},
   issn={0926-6364},
   review={\MR{2065168 (2006a:60105)}},
   doi={10.1163/156939704323074700},
}

\bib{JG}{article}{
  author = {{Gon\c{c}alves}, P.},
  author= { {Jara}, M.},
    title = {Universality of KPZ equation},
  journal = {ArXiv e-prints},
  archivePrefix = {arXiv},
  eprint = {arXiv:1003.4478},
  primaryClass = {math.PR},
     year = {2010}
}

\bib{controlling}{article}{
	title = {Controlling rough paths},
	volume = {216},
	issn = {0022-1236},
	url = {http://dx.doi.org/10.1016/j.jfa.2004.01.002},
	doi = {10.1016/j.jfa.2004.01.002},
	number = {1},
	journal = {Journal of Functional Analysis},
	author = {Gubinelli, M.},
	year = {2004},
	pages = {86--140}
}
\bib{kdv}{article}{
	title = {Rough solutions for the periodic Korteweg--de Vries equation},
	volume = {11},
	issn = {1534-0392},
	number = {2},
	journal = {Communications on Pure and Applied Analysis},
	author = {Gubinelli, M.},
	year = {2012},
	pages = {709--733}
}

\bib{Hairer}{article}{
	title = {Solving the {KPZ} equation},
	url = {http://fr.arxiv.org/pdf/1109.6811v2},
	journal = {{ArXiv} e-prints},
	author = {Hairer, M.},
	month = {sep},
	year = {2011}
}

\bib{janson_gaussian_1997}{book}{	
 title = {Gaussian Hilbert Spaces},
	publisher = {Cambridge University Press},
	author = {Janson, S.},
	month = {jun},
	year = {1997},
}

\bib{KPZ}{article}{
  title = {Dynamic Scaling of Growing Interfaces},
  author = {Kardar, M.},
  author = {Parisi, G.},
  author = {Zhang, Y.-C. },
  journal = {Phys. Rev. Lett.},
  volume = {56},
  number = {9},
  pages = {889--892},
  numpages = {3},
  year = {1986},
  month = {Mar},
  doi = {10.1103/PhysRevLett.56.889},
  publisher = {American Physical Society}
}
		
\bib{KV}{article}{
   author={Kipnis, C.},
   author={Varadhan, S. R. S.},
   title={Central limit theorem for additive functionals of reversible
   Markov processes and applications to simple exclusions},
   journal={Comm. Math. Phys.},
   volume={104},
   date={1986},
   number={1},
   pages={1--19},
   issn={0010-3616},
   review={\MR{834478 (87i:60038)}},
}

\bib{RY}{book}{	
    edition = {3rd},
	title = {Continuous Martingales and Brownian Motion},
	isbn = {3540643257},
	publisher = {Springer},
	author = {Revuz, Daniel},
	author = {Yor, Marc},
	month = {dec},
	year = {2004},
}

\bib{MR2353387}{article}{
   author={Russo, F.},
   author={Trutnau, G.},
   title={Some parabolic PDEs whose drift is an irregular random noise in
   space},
   journal={Ann. Probab.},
   volume={35},
   date={2007},
   number={6},
   pages={2213--2262},
   issn={0091-1798},
   review={\MR{2353387 (2008j:60153)}},
   doi={10.1214/009117906000001178},
}
		
\bib{RV}{incollection}{
	series = {Lecture Notes in Mathematics},
	title = {Elements of Stochastic Calculus via Regularization},
	volume = {1899},
	isbn = {978-3-540-71188-9},
	url = {http://www.springerlink.com/content/63u35k7n446q1t64/abstract/},
	booktitle = {S\'eminaire de Probabilit\'es {XL}},
	publisher = {Springer Berlin / Heidelberg},
	author = {Russo, F.},
	author = {Vallois, P.},
	editor = {{Donati-Martin}, C.},
	editor = {\'Emery, Michel},
	editor = {Rouault, A.},
	editor = {Stricker, C.},
	year = {2007},
	keywords = {Mathematics and Statistics},
	pages = {147--185}
}

\bib{Spohn}{article}{
	title = {Superdiffusivity of the {1D} Lattice {Kardar-Parisi-Zhang} Equation},
	volume = {137},
	issn = {0022-4715},
	url = {http://www.springerlink.com/content/54h7q2682842g701/abstract/},
	doi = {10.1007/s10955-009-9831-0},
	number = {5},
	journal = {Journal of Statistical Physics},
	author = {Sasamoto, T.},
	author = {Spohn, H.},
	year = {2009},
	keywords = {Physics and Astronomy},
	pages = {917--935},
}
		
\end{biblist} 
\end{bibdiv} 

\end{document}